\newtheorem*{thmA}{Theorem A}\newtheorem*{thmB}{Theorem B}\newtheorem*{thmC}{Theorem C}
\newtheorem{thm}{Theorem}[section]\newtheorem{lem}[thm]{Lemma}\newtheorem{prop}[thm]{Proposition}
\begin{document}
\title{Supports of irreducible characters of $p$-groups}
\author{Tom Wilde}
\subjclass{Primary 20C15; Secondary 20D15}

\begin{abstract}
If $\chi$ is an irreducible character of a finite group $G$ then the support of $\chi,$ denoted $\mathrm{Supp}(\chi),$ is the set of $g\in G$ with $\chi(g)\neq 0.$ In this note, we study the supports of characters of certain types of $p$-groups. We show that if $\chi\in\mathrm{Irr}(P)$ where $P$ is a metabelian $p$-group, then $\mathrm{Supp}(\chi)$ contains at most $|P|/\chi(1)^2$ conjugacy classes, and we give a bound on the orders of elements in $\mathrm{Supp}(\chi).$ We give more precise results for $p$-groups of nilpotence class at most $3$ and groups of order dividing $p^9.$
\end{abstract}

\maketitle

\section{Introduction}
Let $\mathrm{Irr}(G)$ denote the set of complex irreducible characters of a finite group $G.$ If $\chi\in\mathrm{Irr}(G)$ then the support of $\chi$ is the subset of $G$ on which $\chi$ does not vanish. We write $\mathrm{Supp}(\chi)$ to denote the support of $\chi.$ In this note, we present some results about the supports of irreducible characters of $p$-groups. (A $p$-group is a finite group whose order is a power of the prime $p.$) 

If $\chi$ is an irreducible character of a $p$-group $P,$ then $\chi$ is induced from a linear character of a subgroup $H\subseteq P,$ where $|H|=|P|/\chi(1).$ (See Corollary $6.14$ in \cite{Isaacs1976}.) Then every element of $\mathrm{Supp}(\chi)$ has a conjugate that is contained in $H,$ and it follows that $\mathrm{Supp}(\chi)$ contains at most $|P|/\chi(1)$ conjugacy classes, and also that every element of $\mathrm{Supp}(\chi)$ has order dividing $|P|/\chi(1).$ We show that these bounds can be strengthened for some interesting classes of $p$-groups. Our first result concerns metabelian $p$-groups.

\begin{thmA}
Let $P$ be a metabelian $p$-group and let $\chi\in\mathrm{Irr}(P).$ Then 
\begin{enumerate}
\item $\mathrm{Supp}(\chi)$ contains at most $|P|/\chi(1)^2$ conjugacy classes.
\item If $p$ is odd, each element of $\mathrm{Supp}(\chi)$ has order at most $|P|/\chi(1)^{3/2}.$
\end{enumerate}
\end{thmA}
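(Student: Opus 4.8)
The plan is to set $A=P'$ (abelian, since $P$ is metabelian) and run Clifford theory over $A$. Fix a linear constituent $\lambda\in\mathrm{Irr}(A)$ of $\chi_A$ and let $T=I_P(\lambda)$. Because $P/A$ is abelian, every subgroup containing $A$ is normal, so $T$ is normal in $P$; writing $\chi=\psi^P$ with $\psi\in\mathrm{Irr}(T\mid\lambda)$ the Clifford correspondent and $n=[P:T]$, $d=\psi(1)$, we get $\chi(1)=nd$ and $\chi_T=\sum_{i=1}^n\psi_i$, where the $\psi_i$ are the distinct $P$-conjugates of $\psi$. Since $T$ is normal, any element conjugate into $T$ already lies in $T$, so $\mathrm{Supp}(\chi)\subseteq T$. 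Each $\psi_i$ lies over a $T$-invariant linear character $\lambda_i$, hence is fully ramified over $A$ in the Heisenberg sense: the alternating form $\beta_i(\bar u,\bar v)=\lambda_i([u,v])$ on $T/A$ has radical $Z_i/A$ with $Z_i=Z(\psi_i)$, one has $[T:Z_i]=d^2$, the restriction $\psi_i|_{Z_i}=d\nu_i$ for a linear character $\nu_i$ (so $\nu_i|_A=\lambda_i$), and $\psi_i$ vanishes off $Z_i$. First I would record all of this as the standing setup.

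For part (1), I would reduce the class count to a pointwise inequality. The orbit-counting identity gives that the number of classes in $\mathrm{Supp}(\chi)$ equals $\sum_{g\in\mathrm{Supp}(\chi)}|\mathrm{cl}_P(g)|^{-1}$, while irreducibility gives $\sum_{g\in\mathrm{Supp}(\chi)}|\chi(g)|^2=|P|$. Thus it suffices to prove, for every $g\in\mathrm{Supp}(\chi)$, that $|\mathrm{cl}_P(g)|\,|\chi(g)|^2\ge\chi(1)^2$. After replacing $g$ by a conjugate I may assume $g\in Z:=Z_1$. Since $\psi$ vanishes off $Z$ and $\psi|_Z=d\nu$, the induced-character formula $\chi(g)=|T|^{-1}\sum_{x\in P}\psi(g^x)$ collapses to
\[
\chi(g)=\frac{d\,|C_P(g)|}{|T|}\sum_{h\in\mathrm{cl}_P(g)\cap Z}\nu(h),
\]
and substituting this turns the desired inequality into the equivalent assertion
\[
\Big|\sum_{h\in\mathrm{cl}_P(g)\cap Z}\nu(h)\Big|^2\ \ge\ |\mathrm{cl}_P(g)|.
\]
Call this inequality $(\star)$.

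Here the metabelian hypothesis does the real work. Every $P$-conjugate of $g$ has the form $g[g,x]$ with $[g,x]\in P'=A\subseteq Z$, so the whole class lies in the coset $gA\subseteq Z$; in particular $\mathrm{cl}_P(g)\cap Z=\mathrm{cl}_P(g)$, and writing $D_g=g^{-1}\mathrm{cl}_P(g)\subseteq A$ and using $\nu|_A=\lambda$, the sum in $(\star)$ becomes $\nu(g)\sum_{a\in D_g}\lambda(a)$. The plan is to show that $D_g$ is a subgroup of $A$, equivalently that the image of the crossed homomorphism $x\mapsto[g,x]$ is a subgroup, so that the character sum obeys the usual dichotomy: it vanishes when $\lambda$ is nontrivial on $D_g$ and equals $|D_g|=|\mathrm{cl}_P(g)|$ otherwise. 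As $g\in\mathrm{Supp}(\chi)$ forces the sum to be nonzero, the second alternative holds and $(\star)$ reduces to $|\mathrm{cl}_P(g)|^2\ge|\mathrm{cl}_P(g)|$. I expect the main obstacle in part (1) to be precisely this ``all or nothing'' step. When $P$ has class at most $2$ the map $x\mapsto[g,x]$ is an honest homomorphism (as $[g,x]\in P'\le Z(P)$), its image is a subgroup, and the argument is complete; the difficulty is controlling the possible cancellation in $\sum_{a\in D_g}\lambda(a)$ for general metabelian $P$, i.e. showing that a conjugacy class is a coset of a subgroup of $P'$, or that nonvanishing forces $\lambda$ to be trivial on $\langle D_g\rangle$.

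For part (2), I would again conjugate so that $g\in Z$, whence $o(g)$ divides $\exp(Z)$, where $P'\le Z\le T$, $[T:Z]=d^2$ and $[P:T]=n$. The baseline bound $o(g)\mid|P|/\chi(1)=|P|/(nd)$ must be improved by a further factor $\chi(1)^{1/2}=(nd)^{1/2}$. Writing $\bar g$ for the image of $g$ in $Z/P'$, one has $o(g)\le\exp(Z/P')\exp(P')$, and the crude estimates $\exp(Z/P')\le|Z|/|P'|$ and $\exp(P')\le|P'|$ already yield $o(g)\le|Z|=|P|/(nd^2)$, which is at most $|P|/(nd)^{3/2}$ exactly when $n\le d$. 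The remaining case $n>d$ is where oddness of $p$ must enter: the orbit length $n=[P:I_P(\lambda)]$ forces $\lambda$ to have many $P$-conjugates in $\mathrm{Irr}(P')$, bounding $n$ in terms of $[P',P]$ and hence forcing $P'$ to have large rank, after which, for $p$ odd, regular $p$-group power identities bound $\exp(P')$ and $\exp(Z/P')$ well below the group orders (these identities fail for $p=2$, matching the hypothesis). Carrying out this exponent estimate and tying $n$, $d$ and $\exp(P')$ together quantitatively is the step I expect to be the crux of part (2).
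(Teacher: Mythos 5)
Your setup (Clifford theory over $A=P'$, the normal inertia group $T$, and the fully ramified description $\psi_i(1)^2=|T:Z_i|$ with $\psi_i$ vanishing off $Z_i=Z(\psi_i)$) is correct, and the reduction of part (1) to the pointwise inequality $(\star)$ is valid arithmetic. But both parts stop exactly where a proof is needed. For (1), the claim that $D_g=\{[g,x]:x\in P\}$ is a subgroup of $P'$ is false in general: already in the dihedral group of order $16$ with $g$ of order $8$ one has $D_g=\{1,g^6\},$ which is not a subgroup, and for the faithful linear $\nu$ the sum $1+\nu(g^6)=1-\mathrm i$ is neither $0$ nor $|D_g|$ --- so even the weaker ``all or nothing'' dichotomy fails (here $(\star)$ happens to hold, but not by your mechanism). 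Worse, the pointwise inequality $h(g)|\chi(g)|^2\ge\chi(1)^2$ on which your whole strategy rests is established nowhere; the paper proves it, as an equality, only for faithful characters of groups of nilpotence class at most $3$ (that is Theorem B(1)), by a quite different argument, and its proof of A(1) deliberately avoids any pointwise claim. Instead it bounds the aggregate quantity $\sigma(A,\lambda,H)=\sum_{x\in A,\ \mathrm{tr}_H(\lambda)(x)\neq 0}|C_H(x)|$ by $|A||C_H(\lambda)|$ through an induction over chains of $H$-invariant subgroups of index $p$ (Lemmas \ref{sigmalem1} and \ref{sigmalem2}), so that cancellation on individual classes is absorbed by the induction rather than controlled class by class. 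To salvage your route you would have to either prove $(\star)$ for all metabelian $p$-groups or switch to an averaging argument of this kind.

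For (2), your computation correctly disposes of the case $n\le d$, but the remaining case $n>d$ --- which includes the very common situation $d=1$, i.e.\ $\chi$ induced from a linear character of $T$ --- is left entirely to an unexecuted plan about regular $p$-groups; nothing in the proposal actually bounds $\exp(P')$ or $\exp(Z/P')$ there. The paper's mechanism is different and is where oddness of $p$ genuinely enters: one shows $\mathbb Q(\chi)=\mathbb Q(\zeta_{p^r})$ with $p^r\le|P|/\chi(1)^2$ (Lemmas \ref{fieldlem} and \ref{qchilem}), deduces by a relative trace computation that if the order of $x$ does not divide $|P|/\chi(1)^2$ then some constituent of $\chi_{\langle x\rangle}$ is unfaithful, whence $\langle x\rangle\cap\langle x\rangle^g=1$ for some $g$ (Lemma \ref{corefree}), and then concludes $|\langle x\rangle|^2\le|A|=|P|/\chi(1),$ which together with $\chi(1)^2\le|P|$ yields the exponent $3/2.$ As written, neither part of the theorem is proved.
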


Theorem A$(1)$ is false for $p$-groups in general. Using GAP (\cite{GAP}), we found that the group $P$ with GAP identifier $\text{SmallGroup}(512,2015)$ has two characters of degree $8$ that are non-vanishing on $10$ conjugacy classes of $P.$ Since $10\times 64>512,$ this violates the bound of Theorem A$(1).$ The group $P$ has derived length $3.$ 

The exponent of $2$ in Theorem A$(1)$ cannot be improved, but in the case of Theorem A$(2),$ we do not know the best value of the exponent. Theorems B$(4)$ and C below show that its correct value is $2$ in some cases. Since groups of nilpotence class at most $3$ are metabelian, Theorem B can be seen as a strong version of Theorem A for such groups.

\begin{thmB}
Let $P$ be a $p$-group of nilpotence class at most $3.$ Let $\chi\in\mathrm{Irr}(P)$ and $x\in\mathrm{Supp}(\chi).$ Let $h(x)$ be the size of the conjugacy class of $x.$ Then
\begin{enumerate}
\item If $\chi$ is faithful, then $\frac{\chi(x)\sqrt{h(x)}}{\chi(1)}$ is a root of unity.
\item If $\chi$ is faithful, $\mathrm{Supp}(\chi)$ contains exactly $|P|/\chi(1)^2$ conjugacy classes.
\item If $p$ is odd, then $x^m\in\mathrm{Supp}(\chi)$ for all $m\in\mathbb Z.$
\item If $p$ is odd, then the order of $x$ divides $|P|/\chi(1)^2.$
\end{enumerate}
\end{thmB}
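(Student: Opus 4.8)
The plan is to reduce the whole theorem to the single \emph{magnitude estimate}
\[
|\chi(x)|^2\,h(x)=\chi(1)^2 \qquad (x\in\mathrm{Supp}(\chi),\ \chi\ \text{faithful}),
\]
from which the other assertions follow by softer arguments. Granting it, part (2) drops out of the first orthogonality relation $\sum_{g\in P}|\chi(g)|^2=|P|$: classes outside $\mathrm{Supp}(\chi)$ contribute $0$, while each class $K\subseteq\mathrm{Supp}(\chi)$ contributes $h(x_K)|\chi(x_K)|^2=\chi(1)^2$, so the number of such classes is exactly $|P|/\chi(1)^2$. For (3) I would pass to $P/\ker\chi$ (again of class at most $3$, with $\chi$ faithful), on which supports transport verbatim; and I would reduce the statement to the single case $x\mapsto x^p$, since coprime powers are free (if $\gcd(j,|x|)=1$ then $\chi(x^j)$ is the Galois image of $\chi(x)$ under $\zeta\mapsto\zeta^j$, hence nonzero exactly when $\chi(x)$ is) and every power $x^m$ in a $p$-group is a coprime power of some $x^{p^k}$. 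For (4) the same quotient reduces to the faithful case because $\exp(\ker\chi)$ divides $|\ker\chi|$.

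The engine is the magnitude estimate. The hypothesis $\gamma_4(P)=1$ makes the commutator map $P'\times P\to\gamma_3$ \emph{bilinear} (the Hall-identity correction terms lie in $\gamma_4=1$); composing with the central character $\lambda$, defined by $\chi|_{Z}=\chi(1)\lambda$ and faithful on $Z$ since $\ker\chi=1$, yields a pairing $P'/\gamma_3\times P/P'\to\mathbb C^{\times}$ whose left and right radicals are $L=(P'\cap Z)/\gamma_3$ and $C_P(P')/P'$. The key elementary point is that for $x\in\mathrm{Supp}(\chi)$ a central commutator is trivial: if $[x,g]\in Z$ then $x^g=x[x,g]$ forces $\chi(x)=\lambda([x,g])\chi(x)$, whence $\lambda([x,g])=1$ and $[x,g]=1$. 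So $C_P(x)=\{g:[x,g]\in Z\}$, and the homomorphism $\theta_x\colon P\to P'/\gamma_3$, $g\mapsto[x,g]\gamma_3$, has kernel exactly $C_P(x)$, giving $h(x)=|\mathrm{im}\,\theta_x|$. I would then compute $|\chi(x)|^2$ either from the monomial form $\chi=\mathrm{Ind}_H^P\nu$ with $\nu$ linear and $H$ normal containing $P'$ (so $\chi$ vanishes off $H$ and $\chi(x)=\nu(x)\sum_{t}\nu([x,t])$), or directly from the Schur relation $\sum_{g}\chi(xgx^{-1}g^{-1})=\tfrac{|P|}{\chi(1)}|\chi(x)|^2$; using bilinearity this collapses, for $x\in\mathrm{Supp}(\chi)$, to $|\chi(x)|^2=\chi(1)^2/[P:C_P(W)]$ with $W=\langle[x,P]\rangle$. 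Finally $[P:C_P(W)]=|\mathrm{im}\,\theta_x|/|\mathrm{im}\,\theta_x\cap L|$, and the support condition forces $\mathrm{im}\,\theta_x\cap L=\theta_x(\{g:[x,g]\in Z\})=\theta_x(C_P(x))=1$, so $[P:C_P(W)]=h(x)$ and the estimate follows.

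For the root of unity in (1), put $\alpha=\chi(x)\sqrt{h(x)}/\chi(1)$, so $|\alpha|=1$ by the estimate. Since $C_P(x^j)=C_P(x)$ when $\gcd(j,|x|)=1$, we get $h(x^j)=h(x)$, and $\sqrt{h(x)}$ (a square root of a prime power) lies in a cyclotomic field with the Galois group acting by a sign; hence every Galois conjugate of $\alpha$ again has absolute value $1$. Recognising the surviving factor as the normalised Gauss sum of the nondegenerate bilinear form above identifies $\alpha$ as a root of unity (alternatively, $\alpha$ is an algebraic integer with all conjugates on the unit circle, so Kronecker's theorem applies).

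Parts (3) and (4) for odd $p$ are where I expect the real difficulty. After reducing to the faithful case, nonvanishing of $\chi$ at $x$ becomes nonvanishing of the Gauss sum $g(\bar x)=\sum_{q\in P/H}\nu(\delta_q\bar x)$ on $\bar H=H/H'$, where $\delta_q=q-1$ acts by conjugation; because $\gamma_4=1$ one has $I^3\bar H=0$ for the augmentation ideal $I$, which makes $q\mapsto\nu(\delta_q\bar x)$ a \emph{quadratic} function whose associated bilinear form takes values in $\nu(\overline{\gamma_3})$. By the standard evaluation, $g(\bar x)\neq0$ iff the linear part of this function is trivial on the radical of the form, and the task in (3) is to show this criterion survives $\bar x\mapsto p\bar x$, using $\nu(\delta_q(p\bar x))=\nu(\delta_q\bar x)^p$ and that the form scales by $p$. \textbf{This step is the crux and genuinely requires $p$ odd}: for an unstructured multiset of $p$-power roots of unity the implication ``$\sum\mu_q\neq0\Rightarrow\sum\mu_q^p\neq0$'' is false (e.g.\ $1+\zeta_9+\zeta_9^2\neq0$ while its cubes sum to $0$), so one must exploit the quadratic structure and the oddness of $p$ rather than the raw values. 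Granting (3), one has $\langle x\rangle\subseteq\mathrm{Supp}(\chi)$, and the same quadratic data applied along the tower $x,x^p,x^{p^2},\dots$ bounds the exponent of the support, yielding $|x|\mid |H|/\chi(1)=|P|/\chi(1)^2$ in the faithful case and hence (4) in general.
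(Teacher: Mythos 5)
Your reduction of (2) to the magnitude identity $|\chi(x)|^2h(x)=\chi(1)^2$ via the first orthogonality relation is exactly the paper's argument, and your observation that central commutators of support elements are trivial is a correct and relevant ingredient. But the proposal has genuine gaps at the two places where the real work happens. First, in part (1) the claim that the Schur relation $\sum_{g}\chi([x,g])=\frac{|P|}{\chi(1)}|\chi(x)|^2$ ``collapses by bilinearity'' is unsubstantiated: the terms with $[x,g]\notin Z(P)$ do not obviously vanish, and $g\mapsto\chi([x,g])/\chi(1)$ is only a character on the set where $[x,g]$ is central. The paper makes this collapse happen by a nontrivial reduction: it replaces $P$ by a semidirect product $BH$ built from $B=\langle x^P\rangle$ and $H=P/C_P(\lambda_B)$, then identifies a normal subgroup $N=[B,H]H$ with $P=\langle x\rangle N$ on which $\chi_N$ is \emph{irreducible and fully ramified over $Z(P)$}, so that $\chi$ genuinely vanishes at every non-central $[x,g]$ with $g\in N$. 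Without something playing this role, your formula $|\chi(x)|^2=\chi(1)^2/[P:C_P(W)]$ is an assertion, not a computation. Relatedly, your Kronecker alternative for the root of unity assumes $\alpha=\chi(x)\sqrt{h(x)}/\chi(1)$ is an algebraic integer, which itself requires the ideal-theoretic step (the paper shows $(\chi(x))=(\overline{\chi(x)})$ is a power of the unique ramified prime of $\mathbb Q(\zeta_{p^n})$, whence $\chi(x)/\overline{\chi(x)}$ is a root of unity).

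Second, and more seriously, part (3) is not proved: you correctly identify it as the crux and correctly observe that the naive statement ``$\sum\mu_q\neq0\Rightarrow\sum\mu_q^p\neq0$'' is false for unstructured root-of-unity sums, but you then stop at ``one must exploit the quadratic structure and the oddness of $p$.'' The paper's proof of (3) is not an explicit Gauss-sum evaluation at all; it invokes Isaacs' theorem on character fives (Theorem 9.1 of \cite{Isaacs1973}) applied to $(P,N,Z,\chi_N,\varphi)$ to produce a subgroup $U$ with $UN=P$, $U\cap N=Z$, and a nowhere-zero character $\Psi$ with $\chi_U=\Psi_U\xi$, from which $\mathrm{Supp}(\chi)$ is exactly the union of the conjugates of $U$; closure under powers is then immediate. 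This is a substantial external input that your outline does not replace. Finally, your passage from (3) to (4) (``the same quadratic data \dots bounds the exponent'') also needs an actual argument; the paper supplies a clean one (Lemma \ref{tD}): if $\chi$ is nonvanishing on a subgroup $H$ then $|H|$ divides $|P|/\chi(1)^2$, proved by comparing $[\chi,\chi]_{N_i}$ along a chief series with the indices $|HN_{i-1}:HN_i|$. As it stands, parts (1), (3) and (4) of your proposal are programmes rather than proofs.
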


We remark that in the situation of Theorem B, often $\mathrm{Supp}(\chi)=\mathbf Z(\chi),$ where $\mathbf Z(\chi)/\ker(\chi)$ is the centre of $P/\ker(\chi)$. In other words, $\chi$ is often of central type when regarded as a character of $G/\ker(\chi).$ This always occurs when $P$ has nilpotence class $2,$ by for example Lemma 2.27(f) and Theorem 2.31 in \cite{Isaacs1976}. (In general, groups whose irreducible characters $\chi$ all satisfy $\mathrm{Supp}(\chi)=\mathbf Z(\chi)$ are called GVZ groups, and are studied in \cite{Nenciu2012}.) Theorem B is trivial for such characters, but of course, not all characters of groups of nilpotence class $3$ have this property. For example, if $P$ is the dihedral group of order $16,$ $x\in P$ is an element of order $8$ and $\chi\in\mathrm{Irr}(P)$ is faithful then $x$ is not central but $\chi(x)=\pm\sqrt 2.$ Since $\chi(1)=2$ and $\chi(x^2)=0,$ this example also shows that $p$ does need to be odd for parts $(3)$ and $(4)$ of Theorem B.

We mentioned that we do not know what value is the best possible for the exponent in Theorem A$(2).$ Theorem B shows that the correct exponent is $2$ for $p$-groups of nilpotence class at most $3,$ when $p$ is odd. Our last result shows that the same holds for all $p$-groups of order at most $p^9,$ where $p$ is odd (we do not know what happens for $p$-groups of larger order).

\begin{thmC}
Let $P$ be a $p$-group, where $p$ is odd, and let $\chi\in\mathrm{Irr}(P).$ Suppose there exists $x\in\mathrm{Supp}(\chi)$ such that the order of $x$ does not divide $|P|/\chi(1)^2.$ Then $\chi(1)$ is divisible by $p^4$ and $|P|$ is divisible by $p^{10}.$
\end{thmC}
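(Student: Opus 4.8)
The plan is to prove the contrapositive in two stages: first reduce the bound on $|P|$ to the bound on $\chi(1),$ then establish the latter. Write $\chi(1)=p^a,$ $|P|=p^n$ and recall that $\chi(1)^2\mid|P|$ for a $p$-group, so the failure of divisibility in the hypothesis says exactly that $x$ has order $p^e$ with $e\ge n-2a+1.$ The key preliminary observation is that a counterexample cannot be of central type: if $\mathrm{Supp}(\chi)=\mathbf Z(\chi)$ then $\chi(1)^2=|P:\mathbf Z(\chi)|,$ so $|\mathbf Z(\chi)|=|P|/\chi(1)^2,$ and every element of $\mathrm{Supp}(\chi)=\mathbf Z(\chi)$ has order dividing $|\mathbf Z(\chi)|$ by Lagrange, contradicting the hypothesis. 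Hence $\chi(1)^2<|P:\mathbf Z(\chi)|,$ and since both sides are powers of $p$ we get $|P:\mathbf Z(\chi)|\ge p\,\chi(1)^2.$ Granting the degree bound $a\ge4,$ this yields $|P:\mathbf Z(\chi)|\ge p^{2a+1}\ge p^9,$ and as $\mathbf Z(\chi)\supseteq\mathbf Z(P)\ne1$ we conclude $|P|\ge p\cdot p^9=p^{10}.$ So everything reduces to proving that a counterexample forces $a\ge4$; equivalently, that if $a\le3$ then every $x\in\mathrm{Supp}(\chi)$ has order dividing $|P|/\chi(1)^2.$

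Next I would reduce this degree statement to faithful characters. Passing to $P/\ker(\chi)$ replaces $\chi$ by a faithful character of the same degree and sends $x$ to an element $\bar x$ of the support whose order divides that of $x$; conversely, factoring $o(x)$ through $\langle x\rangle\cap\ker(\chi)$ shows that the faithful bound for $\bar x,$ multiplied by the Lagrange bound on $\langle x\rangle\cap\ker(\chi),$ gives the desired bound for $x$ (all quantities being powers of $p$). So assume $\chi$ is faithful and monomial, $\chi=\lambda^P$ with $\lambda$ linear on a subgroup $H$ of index $p^a.$ Every element of $\mathrm{Supp}(\chi)$ is $P$-conjugate into $H,$ so we may take $x\in H,$ and the goal becomes the clean inequality $[H:\langle x\rangle]\ge[P:H],$ i.e.\ $e\le n-2a.$

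I would prove this by induction, peeling off one normal maximal subgroup $M$ at a time and using Clifford theory for the index-$p$ extension $M\le P.$ In the base case $a\le1$ one checks that $H$ is abelian (its derived subgroup lies in the core of $\ker\lambda,$ which is trivial by faithfulness), so either $\exp(H)\le|H|/p$ and we are done directly, or $H$ is cyclic, in which case $P$ is abelian or modular of nilpotence class at most $2$ and Theorem B(4) applies. For the inductive step, the cases $\chi_M$ irreducible and $\chi=\psi^P$ induced from $M$ each reduce the degree or the order and feed into the inductive hypothesis. The delicate point is that in the induced case (and in the dual, fully ramified case) the naive induction produces only $e\le n-2a+1$: writing $\chi(x)=\sum_{i=0}^{p-1}\psi^{s^i}(x)$ as a sum over the $p$ cosets of $M,$ the inductive bound controls each term but leaves one factor of $p$ unaccounted for.

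The main obstacle, then, is to recover this last factor of $p$ by showing that on the borderline elements of order exactly $p^{n-2a+1}$ the coset sum $\sum_i\psi^{s^i}(x)$ in fact vanishes when $p$ is odd. This is precisely the phenomenon that fails for $p=2$ (witnessed by the dihedral example after Theorem B) and is what ultimately confines counterexamples to $a\ge4.$ I expect to establish the cancellation using the exact character-value information of Theorem B(1) on the relevant sections of class at most $3$: the $p$ conjugate values $\psi^{s^i}(x)$ should all share a common absolute value and be permuted, up to a primitive $p$-th root of unity $\zeta,$ by the action of $P/M,$ so that their sum is a multiple of the relation $1+\zeta+\cdots+\zeta^{p-1}=0.$ Making this cancellation precise and uniform for $a=2$ and $a=3,$ and verifying that it can first break down only once $a$ reaches $4,$ is where the real work lies.
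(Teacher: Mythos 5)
Your reductions at the start are fine and agree with the paper's: a counterexample cannot be of central type, one may pass to $P/\ker\chi$, and the whole theorem reduces to showing $a\ge 4$ where $\chi(1)=p^a$. You also correctly isolate the crux: induction over a maximal subgroup $M$ loses exactly one factor of $p$ in the case $\chi=\psi^P$, so one must rule out elements of order exactly $p^{n-2a+1}$ in the support. But at that point the proposal stops being a proof. You explicitly defer the cancellation step (``where the real work lies''), and the mechanism you sketch for it does not hold up: you propose to apply Theorem B$(1)$ to ``the relevant sections of class at most $3$,'' yet nothing guarantees that the sections arising in a general $p$-group have nilpotence class at most $3$ --- removing that hypothesis is precisely what Theorem C is about. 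Moreover, you give no reason why the proposed cancellation $1+\zeta+\cdots+\zeta^{p-1}=0$ should occur for $a\le 3$ but ``first break down'' at $a=4$; that threshold is the entire content of the theorem, so the argument is circular as it stands. (Your base case $a\le 1$ is essentially correct, but it does not feed a working inductive step.)

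For comparison, the paper closes the gap by a quite different, global route, and the ingredients it uses are exactly the ones absent from your sketch. First, Lemma \ref{corefree} --- a cyclotomic trace argument: if $o(x)$ does not divide $|P|/\chi(1)^2$, then $\chi(x)$ cannot be a sum of primitive $(p^e)^{\mathrm{th}}$ roots of unity because its trace to $\mathbb Q(\chi)=\mathbb Q(\zeta_{p^f})$ with $f<e$ would vanish; this forces $\langle x\rangle\cap\langle x\rangle^g=1$ for some $g$, giving the crucial inequality $2e\le n-1$. Second, the Burnside bound on $|\mathrm{Aut}(A)|$ for abelian $p$-groups, which (once $n\ge 8$) produces normal subgroups $X\subseteq Y$ with $|X|=p^3$, $|Y|=p^5$ and $X\subseteq Z(Y)$. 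Third, Lemma \ref{ilem} and Isaacs' Theorem \ref{FRC} on fully ramified sections, which pin a specific power of $x$ inside $Y$ or $X$; combined with the trivial-intersection property this yields $e+a\le n-3$ and hence $a\ge 4$. If you want to salvage your approach, the place to look is the odd-$p$ root-of-unity constraint on $\chi(x)$ (Lemmas \ref{fieldlem}--\ref{corefree}); the purely local Clifford-theoretic cancellation you describe is not, by itself, enough.
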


The proofs of Theorems A, B and C are in Sections $2,3$ and $4$ respectively. As far as we know, these results have not been reported before, but similar questions have been considered in \cite{Chillag1999} and \cite{Moreto2004}, for example. In particular, we draw attention to Theorem C in \cite{Moreto2004}, which looks at the situation from the opposite point of view, giving a lower bound for the number of conjugacy classes on which an irreducible character of an arbitrary $p$-group vanishes.

\section{Metabelian $p$-groups}
In this section, we prove Theorem A. We need the following well-known fact about metabelian groups.

\begin{lem}\label{nM}
Let $G$ be a finite metabelian group and let $\chi\in\mathrm{Irr}(G).$ Then $G$ has a normal subgroup $K$ with $G/K$ abelian, such that $\chi=\lambda^G$ for some linear character $\lambda\in\mathrm{Irr}(K).$
\end{lem}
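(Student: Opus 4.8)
The plan is to prove the statement by induction on $|G|,$ but to make the induction go through I would first strengthen it as follows: for every abelian normal subgroup $A_0\trianglelefteq G$ with $G'\le A_0$ and every $\chi\in\mathrm{Irr}(G),$ there is a subgroup $K$ with $A_0\le K\le G$ and a linear $\lambda\in\mathrm{Irr}(K)$ such that $\chi=\lambda^G.$ Taking $A_0=G'$ recovers the lemma, since $A_0\le K$ then forces $G'\le K,$ which makes $K$ normal with $G/K$ abelian. Carrying the fixed subgroup $A_0$ is the main subtlety of the argument: a naive induction produces an inducing subgroup that is normal in an inertia subgroup but not obviously normal in all of $G,$ and insisting that $K$ contain $A_0\supseteq G'$ is exactly what repairs this.

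Inside the induction I would replace $A_0$ by a subgroup $A$ that is maximal among abelian normal subgroups of $G$ containing $A_0.$ The point of this choice is that such an $A$ is self-centralising, $\mathbf{C}_G(A)=A$: if some $g$ centralised $A,$ then $A\langle g\rangle$ would be an abelian subgroup, normal because it contains $G',$ properly containing $A$ unless $g\in A.$ I would then apply Clifford theory to a constituent $\theta$ of $\chi_A,$ which is linear because $A$ is abelian, and set $T=I_G(\theta);$ note that $A\le T\trianglelefteq G,$ since any subgroup containing $G'$ is normal.

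The argument then splits into cases according to $T.$ If $T<G,$ the Clifford correspondence writes $\chi=\psi^G$ with $\psi\in\mathrm{Irr}(T),$ and since $A$ is an abelian normal subgroup of $T$ with $T'\le G'\le A,$ the inductive hypothesis applied to the pair $(T,A)$ yields $K\supseteq A\supseteq A_0$ and a linear $\lambda$ with $\psi=\lambda^K,$ whence $\chi=\lambda^G$ by transitivity of induction. If $T=G,$ then $\theta$ is $G$-invariant and $\chi_A=e\theta,$ so $N:=\ker\theta$ is normal in $G$ and contained in $\ker\chi.$ When $N\neq 1$ I pass to $G/N,$ feeding $A_0N/N$ as the distinguished subgroup, and finish by induction, inflating the resulting linear character back to $G.$

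The remaining case $N=1$ is the one I expect to be the genuine obstacle in any approach: here $\theta$ is faithful and $G$-invariant, which forces $a^g=a$ for all $a\in A$ and $g\in G,$ i.e.\ $A\le\mathbf{Z}(G)$ and hence $\mathbf{C}_G(A)=G.$ This is precisely where the maximal choice of $A$ pays off, since it gives $\mathbf{C}_G(A)=A,$ so $G=A$ is abelian and $\chi$ is already linear. (Without the maximal choice one would instead be forced into a central-type situation and have to produce $K$ as the preimage of a maximal isotropic subgroup for the symplectic form $\mu([x,y])$ on $G/\mathbf{Z}(G);$ the maximal choice avoids this.) Throughout, the one thing needing genuine care is the bookkeeping ensuring that the inducing subgroup produced at each stage contains $A_0,$ so that the final $K$ contains $G'$ and is therefore normal in $G$ with $G/K$ abelian.
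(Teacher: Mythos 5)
Your argument is correct and complete. The paper does not actually prove this lemma from scratch: it simply cites Corollary 2.6 of Isaacs--Passman (\emph{Groups whose irreducible representations have degrees dividing $p^e$}), applied with $N=G'$, so your self-contained inductive proof is a genuinely different route at the level of the text, though it is in substance the standard argument underlying that citation. The comparison is straightforward: the paper's one-line reference keeps the exposition short at the cost of sending the reader elsewhere, while your proof makes the note self-contained and, more importantly, isolates exactly where metabelianness enters (the existence of an abelian normal subgroup containing $G'$, which can then be enlarged to a self-centralising one). Your bookkeeping with the distinguished subgroup $A_0$ is the right fix for the real pitfall here --- a naive induction through the inertia group $T=I_G(\theta)$ only yields a subgroup normal in $T$ --- and each of your three cases ($T<G$ via the Clifford correspondence and transitivity of induction; $\ker\theta\neq 1$ via passage to the quotient; $\theta$ faithful and $G$-invariant forcing $A\leq\mathbf Z(G)$ and hence $G=A$ by self-centralisation) closes correctly, with the final $K$ containing $G'$ and therefore normal with abelian quotient as required.
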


\begin{proof}
This follows from Corollary 2.6 in \cite{Isaacs1964}, taking the subgroup $N$ in that corollary to be the derived subgroup of $G.$
\end{proof}

We begin with the proof of Theorem A$(1).$ If $A$ and $H$ are groups with $H$ acting on $A$ (we always mean an action via automorphisms of $A$), then for $\lambda\in\mathrm{Irr}(A),$ we define the trace of $\lambda$ with respect to $H,$ denoted $\mathrm{tr}_H(\lambda),$ by $$\mathrm{tr}_H(\lambda)=\sum_{h\in H}\lambda^h.$$ Equivalently, $\mathrm{tr}_H(\lambda)=(\lambda^{AH})_A,$ where $AH$ is the semidirect product of $A$ and $H$ corresponding to the given action and $\lambda^{AH}$ is the induced (not necessarily irreducible) character. Also define $$\sigma(A,\lambda,H)=\mathop{\sum_{x\in A\text{ with }}}_{\mathrm{tr}_H(\lambda)(x)\neq 0}|{C_H(x)}|.$$ We prove Theorem A$(1)$ via the following result:

\begin{prop}\label{sigmaineq}
Suppose $A$ and $H$ are abelian $p$-groups for some prime $p,$ with $H$ acting on $A.$ Let $\lambda\in\mathrm{Irr}(A).$ Then $\sigma(A,\lambda,H)\leq | A||C_H(\lambda)|.$
\end{prop}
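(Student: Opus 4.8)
The plan is to reformulate $\sigma$ as an orbit count and then argue by induction on $|A|$. Since $A$ is abelian, $\lambda$ is linear and $T:=\mathrm{tr}_H(\lambda)$ is constant on each $H$-orbit of $A$; writing $\mathcal O$ for the orbit of $x$ and using orbit--stabiliser, $\sum_{x\in\mathcal O}|C_H(x)|=|H|$, so that $\sigma(A,\lambda,H)=|H|\cdot N$, where $N$ is the number of $H$-orbits $\mathcal O\subseteq A$ with $T|_{\mathcal O}\neq 0$. Since $|C_H(\lambda)|=|H|/|\mathcal O_\lambda|$ for the orbit $\mathcal O_\lambda$ of $\lambda$ in $\mathrm{Irr}(A)$, the assertion is equivalent to $N\le|A|/|\mathcal O_\lambda|$. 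I would also record the orthogonality identity $\sum_{x\in A}|T(x)|^2=|A|\,|H|\,|C_H(\lambda)|$, equivalently that the Fourier transform of $T$ is supported exactly on $\mathcal O_\lambda$.

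One might hope for the pointwise bound $\bigl|\sum_{y\in\mathcal O}\lambda(y)\bigr|^2\ge|\mathcal O|$ on each orbit, which together with the identity above would give the result at once; but this is false, since for long orbits the left-hand side is a higher Gauss sum (already cubic sums can be nonzero and smaller than $\sqrt{|\mathcal O|}$). So I would instead induct on $|A|$, peeling off a minimal $H$-invariant subgroup $Z\le A$. Such a $Z$ has order $p$ and, because $H$ is a $p$-group acting on $\mathbb Z/p$, is centralised by $H$; thus $Z\le C_A(H)$. The argument then splits according to whether $\lambda|_Z$ is trivial.

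If $\lambda|_Z=1$, then $\lambda$, and hence $T$, factors through $\bar A:=A/Z$, so $\mathrm{Supp}(T)$ is a union of $Z$-cosets. The key point is that $C_H(x)$ is constant along each such coset: for $h\in C_H(\bar x)$ one has $x^h-x\in Z$, and $h\mapsto x^h-x$ is a homomorphism $C_H(\bar x)\to Z$ which, because $Z$ is fixed by $H$, does not depend on the representative $x$ of $\bar x$; its kernel is $C_H(x)$. Summing over each coset gives $\sum_{x\mapsto\bar x,\,T(x)\neq0}|C_H(x)|=p\,|C_H(x)|\le p\,|C_H(\bar x)|$, whence $\sigma(A,\lambda,H)\le p\,\sigma(\bar A,\bar\lambda,H)$. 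Since the $H$-equivariant identification $\lambda\leftrightarrow\bar\lambda$ gives $C_H(\lambda)=C_H(\bar\lambda)$, the inductive hypothesis $\sigma(\bar A,\bar\lambda,H)\le|\bar A|\,|C_H(\bar\lambda)|$ yields exactly $\sigma(A,\lambda,H)\le|A|\,|C_H(\lambda)|$.

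The remaining case $\lambda|_Z\neq1$ is where I expect the real difficulty. It is forced only when $C_A(H)=Z$ has order exactly $p$ (otherwise $\ker(\lambda|_{C_A(H)})$ contains a minimal $H$-invariant subgroup on which $\lambda$ is trivial, returning us to the previous case), so the action is nearly fixed-point-free. Here $T(x+z)=\lambda(z)T(x)$, so $\mathrm{Supp}(T)$ still descends to $\bar A$, and writing $T=|C_H(\lambda)|\,\lambda\cdot G$ we find $G=\sum_{\nu\in\Omega'}\nu$, where $\Omega'\subseteq\widehat{\bar A}$ is not an $H$-orbit for the linear action but an orbit for the affine action twisted by the central-character cocycle $h\mapsto\lambda^{-1}\lambda^h$. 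The inductive argument does not apply verbatim to this twisted orbit, and the crux will be to show that such a projective/twisted orbit count still obeys $N\le|A|/|\mathcal O_\lambda|$; I would attempt this by absorbing the cocycle so as to reduce to a genuine instance on a smaller group, and I anticipate this central-character twist to be the main obstacle.
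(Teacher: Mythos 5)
Your proposal has a genuine gap: the case $\lambda|_Z\neq 1$, which you correctly identify as the heart of the matter, is not proved. Everything up to that point is fine --- the reformulation $\sigma(A,\lambda,H)=|H|\cdot N$, the orthogonality identity, and the descent to $A/Z$ when $\lambda$ kills the minimal $H$-invariant subgroup $Z$ are all correct (in that case $C_H(x)$ is indeed constant on $Z$-cosets and contained in $C_H(\bar x)$, and $C_H(\lambda)=C_H(\bar\lambda)$, so induction closes). But the twisted case is exactly where the content of the proposition lives, and ``absorbing the cocycle so as to reduce to a genuine instance on a smaller group'' is a hope, not an argument; nothing in the proposal indicates how the twisted orbit count would be controlled. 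A secondary slip: the claim that the bad case forces $|C_A(H)|=p$ is not right --- every minimal $H$-invariant subgroup of $A$ lies in $C_A(H)$ and has order $p$, so the bad case occurs precisely when $\lambda$ is faithful on $C_A(H)$, which forces $C_A(H)$ to be cyclic but not of order $p$.

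It is worth noting that the paper sidesteps the twist entirely by inducting from the top rather than the bottom: it fixes an $H$-invariant subgroup $B$ of \emph{index} $p$ and compares $\sigma(A,\lambda,H)$ with $\sigma(B,\lambda_B,H)$. The dichotomy there (Lemma \ref{sigmalem1}) is whether $C_H(\lambda_B)$ equals $C_H(\lambda)$ or is $p$ times larger, and in the latter case $\mathrm{tr}_H(\lambda)$ vanishes off $B$, so no boundary term arises. The real work is Lemma \ref{sigmalem2}, which bounds the contribution of $A\setminus B$ by a sub-induction over the $p+1$ index-$p$ subgroups containing a common index-$p^2$ subgroup $C$, using that $H$ (being abelian) acts trivially on $A/C$. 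If you want to salvage your bottom-up approach you would need an analogue of that lemma for the twisted situation; as it stands, the proof is incomplete.
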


Our proof of Proposition \ref{sigmaineq} requires two lemmas.

\begin{lem}\label{sigmalem1}
Suppose $A$ and $H$ are $p$-groups with $A$ abelian and $H$ acting on $A.$ Let $\lambda\in\mathrm{Irr}(A)$ be a character of $A.$ Let $B$ be a $H$-invariant subgroup of $A$ with $|A:B|=p.$ Then one of the following cases holds:
\begin{enumerate}
\item $C_H(\lambda_B)=C_H(\lambda).$
\item $|C_H(\lambda_B)|=p|C_H(\lambda)|$ and $\mathrm{tr}_H(\lambda)(x)=0$ for all $x\in A\backslash B.$
\end{enumerate}
\end{lem}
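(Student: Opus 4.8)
The plan is to compare the two stabilisers by letting $C_H(\lambda_B)$ act on the fibre of the restriction map $\mathrm{Irr}(A)\to\mathrm{Irr}(B)$ above $\lambda_B$. First I would record the trivial inclusion $C_H(\lambda)\subseteq C_H(\lambda_B)$. Since $A$ is abelian and $|A:B|=p$, the fibre $S=\{\mu\in\mathrm{Irr}(A):\mu_B=\lambda_B\}$ is the coset $\lambda N$, where $N=\mathrm{Irr}(A/B)$ is the group of linear characters of $A$ that are trivial on $B$; thus $|S|=p$. Because $B$ is $H$-invariant, $N$ is an $H$-invariant subgroup of $\mathrm{Irr}(A)$, and $C_H(\lambda_B)$ permutes $S$. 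The stabiliser in $C_H(\lambda_B)$ of the point $\lambda\in S$ is exactly $C_H(\lambda)$, so the orbit of $\lambda$ has length $|C_H(\lambda_B):C_H(\lambda)|$. This index is a power of $p$ (as $H$ is a $p$-group) and is at most $|S|=p$ (the orbit lies inside $S$); hence it equals $1$ or $p$, which is precisely the dichotomy between conclusions (1) and (2).

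The remaining work is the vanishing statement in case (2). Here the orbit of $\lambda$ under $C_H(\lambda_B)$ is all of $S$, so the full $H$-orbit $O$ of $\lambda$ contains $S=\lambda N$. The key observation is that $O$ is then a union of complete fibres: for any $h\in H$ we have $(\lambda N)^h=\lambda^h N^h=\lambda^h N$, using that $N$ is $H$-invariant, so the fibre through each point $\lambda^h$ of $O$ is itself contained in $O$. Writing $\mathrm{tr}_H(\lambda)=|C_H(\lambda)|\sum_{\mu\in O}\mu$ (each distinct $\lambda^h$ arising from exactly $|C_H(\lambda)|$ elements of $H$), I would group the sum over $O$ according to its constituent cosets $\mu N$ and apply the standard identity $\sum_{\nu\in N}\nu(x)=p$ if $x\in B$ and $0$ otherwise. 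Since each coset contributes $\mu(x)\sum_{\nu\in N}\nu(x)$, every contribution is $0$ when $x\in A\setminus B$, giving $\mathrm{tr}_H(\lambda)(x)=0$ there.

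The one genuinely substantive point, as opposed to bookkeeping, is recognising in case (2) that the containment $S\subseteq O$ forces $O$ to be a union of full fibres, since this is exactly what makes the coset-orthogonality cancellation apply. Everything else reduces to the orbit-counting of the first paragraph and the elementary identity for summing a character group of order $p$ over a group element, so I would expect the main care to lie in the $H$-invariance of $N$ and the clean identification of the fibres of restriction with the cosets $\mu N$.
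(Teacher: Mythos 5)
Your proposal is correct and follows essentially the same route as the paper: both arguments let $C_H(\lambda_B)$ act on the $p$-element fibre $\mathrm{Irr}(A\vert\lambda_B)=\lambda\,\mathrm{Irr}(A/B)$ and use orbit--stabilizer together with the $p$-group hypothesis to force $|C_H(\lambda_B):C_H(\lambda)|\in\{1,p\}$. The only (cosmetic) difference is in the vanishing step: the paper picks one $g\in C_H(\lambda_B)\setminus C_H(\lambda)$, writes $\lambda^g=\lambda\mu$ with $1\neq\mu\in\mathrm{Irr}(A/B)$, and re-indexes the sum to get $\mathrm{tr}_H(\lambda)=\mathrm{tr}_H(\lambda)\mu$, whereas you decompose the $H$-orbit into full $\mathrm{Irr}(A/B)$-cosets and apply orthogonality; these are the same cancellation.
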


\begin{proof}
Let $\mathrm{Irr}(A\vert\lambda_B)$ denote the set of irreducible characters of $A$ that lie over $\lambda_B\in\mathrm{Irr}(B).$ Then $C_H(\lambda_B)$ acts on $\mathrm{Irr}(A\vert\lambda_B).$ If the action fixes $\lambda$ then case $(1)$ holds. If not, then since $|C_H(\lambda_B):C_H(\lambda)|\leq|\mathrm{Irr}(A\vert\lambda_B)|=p$ and $H$ is a $p$-group, $|C_H(\lambda_B):C_H(\lambda)|=p.$ This is case $(2),$ but we must also show that $\mathrm{tr}_H(\lambda)$ vanishes outside of $B$ in this case. Fix $g\in C_H(\lambda_B)$ with $g\notin C_H(\lambda),$ so $\lambda^g=\lambda\mu$ for some $\mu\in\mathrm{Irr}(A/B)$ with $\mu\neq 1.$ Since $H$ centralizes $A/B$ and therefore fixes $\mu,$ we have
$$\mathrm{tr}_H(\lambda)=\sum_{h\in H}\lambda^h=\sum_{h\in H}\lambda^{gh}=\sum_{h\in H}(\lambda\mu)^h=\sum_{h\in H}\lambda^h\mu=\bigl(\mathrm{tr}_H(\lambda)\bigr)\mu.$$ Hence if $\mathrm{tr}_H(\lambda)(x)\neq 0$ then $x\in\mathrm{ker}(\mu)=B.$
\end{proof}

\begin{lem}\label{sigmalem2}
Suppose $A$ and $H$ are abelian $p$-groups with $H$ acting on $A.$ Let $\lambda\in\mathrm{Irr}(A)$ be a character of $A,$ and suppose $B$ is a $H$-invariant subgroup of $A$ with $|A:B|=p.$ Then $$\sigma(A,\lambda,H)\leq\sigma(B,\lambda_B,H)+(|A|-|B|)|C_H(\lambda)|.$$
\end{lem}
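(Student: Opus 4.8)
The plan is to split the defining sum for $\sigma(A,\lambda,H)$ according to whether $x$ lies in $B$ or in $A\setminus B$. Since $B$ is $H$-invariant, restriction to $B$ commutes with the $H$-action, so $\mathrm{tr}_H(\lambda)$ and $\mathrm{tr}_H(\lambda_B)$ agree on $B$; as $C_H(x)$ is the same subgroup of $H$ whether $x$ is viewed in $A$ or in $B$, the part of the sum indexed by $x\in B$ is \emph{exactly} $\sigma(B,\lambda_B,H)$. Writing $T=\sum_{x\in A\setminus B,\ \mathrm{tr}_H(\lambda)(x)\neq0}|C_H(x)|$ for the remaining part, the lemma is equivalent to the single inequality $T\le(|A|-|B|)|C_H(\lambda)|$.

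First I would dispose of the easy alternative. By Lemma \ref{sigmalem1}, either $|C_H(\lambda_B)|=p|C_H(\lambda)|$ and $\mathrm{tr}_H(\lambda)$ vanishes on $A\setminus B$, or $C_H(\lambda_B)=C_H(\lambda)$. In the first case $T=0$ and there is nothing to prove, so the entire content of the lemma is the bound on $T$ under the hypothesis $C_H(\lambda_B)=C_H(\lambda)$. For this I would reformulate using orbits: because $\mathrm{tr}_H(\lambda)$ is constant on $H$-orbits and $\sum_{x\in\mathcal O}|C_H(x)|=|H|$ for every $H$-orbit $\mathcal O$, the number $T$ equals $|H|$ times the number of $H$-orbits in $A\setminus B$ on which $\mathrm{tr}_H(\lambda)$ is non-zero, and the target becomes a bound on that orbit count by $(|A|-|B|)/|\lambda^H|$, where $|\lambda^H|=|H|/|C_H(\lambda)|$.

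Two structural facts drive the argument. First, since $|A:B|=p$ and $H$ is a $p$-group, $H$ centralizes $A/B$, so every coset of $B$ is $H$-invariant and may be analysed separately. Second, in the remaining case the orbits $\lambda^H$ and $(\lambda_B)^H$ have equal size, so restriction to $B$ gives an $H$-equivariant bijection $\lambda^H\to(\lambda_B)^H$. Using $\mathrm{tr}_H(\lambda)=|C_H(\lambda)|\sum_{\mu\in\lambda^H}\mu$, I would write the restriction of $\mathrm{tr}_H(\lambda)$ to a non-trivial coset $tB$ as $|C_H(\lambda)|\sum_{\nu\in(\lambda_B)^H}\mu_\nu(t)\,\nu(u)$ for $x=tu$ with $u\in B$, where $\mu_\nu\in\lambda^H$ is the element restricting to $\nu$; this exhibits $\mathrm{tr}_H(\lambda)$ on $tB$ as a \emph{twist} of $\mathrm{tr}_H(\lambda_B)$ by the unimodular scalars $\mu_\nu(t)$. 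The goal is then to bound, for each of the $p-1$ non-trivial cosets, the number of non-vanishing orbits of this twisted trace by $|B|/|\lambda^H|$, and to sum.

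The main obstacle is precisely this per-coset count, and the reason one cannot proceed naively is worth isolating. It is \emph{false} that $|C_H(x)|\le|C_H(\lambda)|$ on the support: an $H$-orbit inside $A\setminus B$ can be strictly shorter than $|\lambda^H|$, for instance whenever $H$ fixes points of $A$ lying outside $B$, so an orbit-by-orbit comparison breaks down and the bound must come from counting short orbits globally across the whole coset. The natural way to close this is to show that twisting by the scalars $\mu_\nu(t)$ does not increase the number of non-vanishing orbits, reducing the per-coset count to the number of non-vanishing orbits of $\mathrm{tr}_H(\lambda_B)$ itself, which is $\sigma(B,\lambda_B,H)/|H|\le|B||C_H(\lambda_B)|/|H|=|B|/|\lambda^H|$ by the Proposition applied to $B$ (available by induction). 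Establishing this control on the zeros of the twisted trace — equivalently, understanding the affine $H$-action induced on each coset — is the step I expect to require the real work.
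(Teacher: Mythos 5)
Your reduction of the lemma to the single inequality $T\le(|A|-|B|)\,|C_H(\lambda)|$ is correct (the $B$-part of the sum is indeed exactly $\sigma(B,\lambda_B,H)$, and Lemma \ref{sigmalem1} disposes of the case where $\mathrm{tr}_H(\lambda)$ vanishes off $B$), and you correctly identify why the naive orbit-by-orbit comparison fails. But the argument stops exactly where the proof has to begin. Your ``twisted trace'' on a coset $tB$ is, by construction, nothing other than the restriction of $\mathrm{tr}_H(\lambda)$ to $tB$; so the assertion that ``twisting by the scalars $\mu_\nu(t)$ does not increase the number of non-vanishing orbits'' is not a reduction of the problem but a restatement of the per-coset version of the lemma. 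No mechanism is offered for proving it, and for arbitrary unimodular twists the analogous statement is false (a vanishing sum of roots of unity generally stops vanishing after a unimodular twist), so some specific structural input is indispensable. There is also a logical wrinkle in invoking Proposition \ref{sigmaineq} for $B$ ``by induction'': in the paper the Proposition is deduced from this lemma, so you would need to set up a joint induction on $|A|$; that is repairable, but the missing per-coset bound is not.

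The paper's proof supplies the missing idea from a different decomposition. It inducts on $|A|$ and splits into two cases. If $B$ is the \emph{unique} $H$-invariant subgroup of index $p$, then for $x\in A\setminus B$ the subgroup $C_A(C_H(x))$ is $H$-invariant (this is where $H$ abelian is used) and contains $x$, hence equals $A$; so $|C_H(x)|=|C_H(A)|\le|C_H(\lambda)|$ and the naive bound does work on $A\setminus B$. Otherwise there is a second $H$-invariant subgroup $B_1$ of index $p$; setting $C=B\cap B_1$, the quotient $A/C$ is elementary abelian of order $p^2$ centralized by $H$, and $A\setminus B$ decomposes as the disjoint union of $B_i\setminus C$ over the $p$ lines $B_1,\dots,B_p$ other than $B$ --- a decomposition by subgroups through $C$, not by cosets of $B$. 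The inductive hypothesis applies to each pair $(B_i,C)$, and Lemma \ref{sigmalem1} controls the dichotomy $C_H(\lambda_{B_i})=C_H(\lambda)$ versus $|C_H(\lambda_{B_i})|=p\,|C_H(\lambda)|$ with vanishing off $B_i$; either way the contributions sum to at most $(|A|-|B|)\,|C_H(\lambda)|$. You would need to find this (or an equivalent) device to close the gap.
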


\begin{proof}
The proof is by induction on $|A|.$ First, assume that $B$ is the unique $H$-invariant subgroup of $A$ of index $p.$ If $x\in A,$ then since $H$ is abelian, $C_A(C_H(x))$ is a $H$-invariant subgroup of $A$ containing $x.$ As $A$ and $H$ are $p$-groups, every proper $H$-invariant subgroup of $A$ is contained in a $H$-invariant subgroup of index $p.$ Hence in this case $C_H(x)=C_H(A)$ for $x\in A\backslash B.$ Then $$\sigma(A,\lambda,H)\leq\sigma(B,\lambda_B,H)+(|A|-|B|)|C_H(A)|,$$ and the result follows since $C_H(A)\subseteq C_H(\lambda).$

We may therefore choose another $H$-invariant subgroup of index $p$ in $A.$ Let this be $B_1$ and set $C=B\cap B_1.$ Then $H$ acts trivially on $A/C,$ so the $p+1$ proper subgroups of $A$ strictly containing $C$ are all $H$-invariant. Two of these are $B$ and $B_1,$ and we label the others as $B_2,\dots,B_p.$ Then $A\backslash B$ is the disjoint union of the sets $B_1\backslash C,\dots,B_p\backslash C.$ Hence $$\sigma(A,\lambda,H)-\sigma(B,\lambda_B,H)=\sum_{i=1}^p\bigl(\sigma(B_i,\lambda_{B_i},H)-\sigma(C,\lambda_C,H)\bigr).$$

We first consider this equation assuming case $(1)$ of Lemma \ref{sigmalem1} holds for $B_i$ for each $i$ with $1\leq i\leq p.$ Then $C_H(\lambda_{B_i})=C_H(\lambda)$ for $1\leq i\leq p.$ Since $|B_i:C|=p$ for each $i,$ induction gives $$\sigma(B_i,\lambda_{B_i},H)-\sigma(C,\lambda_C,H)\leq(|B_i|-|C|)|C_H(\lambda_{B_i})|=(|B_i|-|C|)|C_H(\lambda)|.$$ The required result follows by summing over $i$ with $1\leq i\leq p.$

Now we may assume that case $(2)$ of Lemma \ref{sigmalem1} holds for $B_i$ for some $i$ with $1\leq i\leq p.$ Then $\mathrm{tr}_H(\lambda)$ vanishes off $B_i,$ so for $j$ with $1\leq j\leq p$ and $j\neq i,$ $\sigma(B_j,\lambda_{B_j},H)=\sigma(C,\lambda_C,H).$ By induction, $\sigma(B_i,\lambda_{B_i},H)-\sigma(C,\lambda_C,H)\leq(|B_i|-|C|)|C_H(\lambda_{B_i})|,$ and by Lemma \ref{sigmalem1}, $|C_H(\lambda_{B_i})|=p|C_H(\lambda)|.$ Hence 
$$\sigma(A,\lambda,H)-\sigma(B,\lambda_B,H)\leq p(|B_i|-|C|)|C_H(\lambda)|=(|A|-|B|)|C_H(\lambda)|.$$
This completes the proof.
\end{proof}

\begin{proof}[Proof of Proposition \ref{sigmaineq}]
We proceed by induction on $|A|.$ We can clearly assume $A\neq 1,$ so we may choose a $H$-invariant subgroup $B$ with $|A:B|=p.$ By induction, $\sigma(B,\lambda_B,H)\leq|B||C_H(\lambda_B)|.$ If case $(2)$ of Lemma \ref{sigmalem1} applies to $B,$ then $\sigma(A,\lambda,H)=\sigma(B,\lambda_B,H)$ and $|B||C_H(\lambda_B)|=|A||C_H(\lambda)|,$ and the result follows. Otherwise, case $(1)$ of Lemma \ref{sigmalem1} applies to $B.$  By Lemma \ref{sigmalem1}, $C_H(\lambda_B)=C_H(\lambda).$ By Lemma \ref{sigmalem2}, $$\sigma(A,\lambda,H)\leq\sigma(B,\lambda_B,H)+(|A|-|B|)|C_H(\lambda)|.$$ Hence $$\sigma(A,\lambda,H)\leq|B||C_H(\lambda)|+(|A|-|B|)|C_H(\lambda)|=|A||C_H(\lambda)|,$$ as required. This completes the proof.
\end{proof}

\begin{proof}[Proof of Theorem A$(1)$]
We write $k\bigl(\mathrm{Supp}(\chi)\bigr)$ to denote the number of conjugacy classes in $\mathrm{Supp}(\chi).$ We must prove that $k\bigl(\mathrm{Supp}(\chi)\bigr)\leq |P|/\chi(1)^2.$ Let $K=\mathrm{ker}(\chi),$ so that $\chi$ is the lift to $P$ of a faithful character $\chi_0\in\mathrm{Irr}(P/K).$ Clearly $k\bigl(\mathrm{Supp}(\chi)\bigr)\leq|K|k\bigl(\mathrm{Supp}(\chi_0)\bigr),$ so it suffices to prove that $k\bigl(\mathrm{Supp}(\chi_0)\bigr)\leq|P/K|/\chi_0(1)^2.$ Hence, we can assume $\chi=\chi_0$ is faithful.

By Lemma \ref{nM}, $\chi=\lambda^P$ for some $A\vartriangleleft P$ with $P/A$ abelian and some linear character $\lambda\in\mathrm{Irr}(A).$ Since $\chi$ is faithful, $A$ is abelian. Let $H=P/A$ with its natural action by conjugation on $A,$ and let $\lambda$ be a constituent of $\chi_A.$ Then $\chi_A=\mathrm{tr}_H(\lambda)$ and the support of $\chi$ is contained in $A.$ Proposition \ref{sigmaineq} applies and $C_H(\lambda)=1$ since $\chi=\lambda^P$ is irreducible. Hence $\sigma(A,\lambda,H)\leq|A|.$ We compute
$$k\bigl(\mathrm{Supp}(\chi)\bigr)=\sum_{x\in A,\chi(x)\neq 0}\frac{|C_H(x)|}{|H|}=\frac{\sigma(A,\lambda,H)}{|H|}\leq\frac{|A|}{|H|}=\frac{|P|}{|H|^2}=\frac{|P|}{\chi(1)^2}.$$ This completes the proof of Theorem A$(1).$
\end{proof}

For the proof of Theorem A$(2),$ we need some properties of the fields generated by the values of characters of $p$-groups. Denote by $\mathbb Q(\chi)$ the field generated by the values of a character $\chi.$ Also, let $\zeta_n=e^{2\pi\mathrm{i}/n},$ and let $\mathrm{tr}_{K/L}$ be the relative trace from $K$ to $L$ where $K\supseteq L$ are fields (no confusion should arise with the different operator $\mathrm{tr}_H$ used above).

\begin{lem}\label{fieldlem}
Let $p$ be an odd prime and let $n\geq 1.$ Let $K$ be a field. Then 
\begin{enumerate}
\item $\mathbb Q(\zeta_p)\subseteq K\subseteq\mathbb Q(\zeta_{p^n})$ if and only if $K=\mathbb Q(\zeta_{p^r})$ for some $r,$ $1\leq r\leq n.$ 
\item If $r>s\geq 1$ and $\zeta$ is any primitive $(p^r)^\mathrm{th}$ root of unity, then $$\mathrm{tr}_{\mathbb Q(\zeta_{p^r})/\mathbb Q(\zeta_{p^s})}(\zeta)=0.$$
\end{enumerate}
\end{lem}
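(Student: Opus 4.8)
The plan is to prove the two parts of Lemma \ref{fieldlem} using standard Galois theory of cyclotomic fields.

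For part (1), I would work with the Galois group $\mathrm{Gal}(\mathbb Q(\zeta_{p^n})/\mathbb Q)\cong(\mathbb Z/p^n\mathbb Z)^\times.$ Since $p$ is odd, this group is cyclic of order $p^{n-1}(p-1),$ and hence its subgroup lattice is totally ordered on each of its Sylow-type components. The subfields $K$ with $\mathbb Q(\zeta_p)\subseteq K\subseteq\mathbb Q(\zeta_{p^n})$ correspond to subgroups $U$ with $\{1\}\subseteq U\subseteq\mathrm{Gal}(\mathbb Q(\zeta_{p^n})/\mathbb Q(\zeta_p)),$ where the latter group is the unique Sylow $p$-subgroup of $(\mathbb Z/p^n\mathbb Z)^\times,$ which is cyclic of order $p^{n-1}.$ A cyclic $p$-group has a unique subgroup of each order $p^{n-r}$ ($1\leq r\leq n$), and the fixed field of the subgroup of order $p^{n-r}$ is exactly $\mathbb Q(\zeta_{p^r}).$ Thus every intermediate field has the claimed form, and conversely each $\mathbb Q(\zeta_{p^r})$ for $1\leq r\leq n$ lies in the required range. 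The forward-looking subtlety here is simply to invoke that $(\mathbb Z/p^n\mathbb Z)^\times$ is cyclic for odd $p$ (this fails for $p=2,$ which is why oddness is assumed), so that the Sylow $p$-subgroup is cyclic and has a unique subgroup of each order.

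For part (2), the relative trace is $\mathrm{tr}_{\mathbb Q(\zeta_{p^r})/\mathbb Q(\zeta_{p^s})}(\zeta)=\sum_{\tau}\tau(\zeta),$ where $\tau$ ranges over $\mathrm{Gal}(\mathbb Q(\zeta_{p^r})/\mathbb Q(\zeta_{p^s})).$ I would identify this Galois group with the subgroup of $(\mathbb Z/p^r\mathbb Z)^\times$ consisting of residues congruent to $1$ modulo $p^s,$ which has order $p^{r-s}.$ Writing $\zeta=\zeta_{p^r}^a$ for some $a$ coprime to $p,$ the sum becomes $\sum_{t}\zeta_{p^r}^{at},$ where $t$ runs over the residues $\equiv 1\pmod{p^s}.$ The key computation is that these exponents $at,$ as $t$ ranges over that coset, form a complete set of representatives for a coset of the subgroup $p^s\mathbb Z/p^r\mathbb Z$ inside $\mathbb Z/p^r\mathbb Z$; since $r>s,$ this subgroup is nontrivial, and the sum of $\zeta_{p^r}$ over a full coset of a nontrivial cyclic subgroup of the additive group of exponents vanishes. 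Concretely, the exponents $at$ run over $\{a(1+p^s m):0\leq m<p^{r-s}\}=a+ap^s\{0,1,\dots,p^{r-s}-1\},$ and since $\gcd(a,p)=1,$ these give $a+p^s\cdot(\text{all residues mod }p^{r-s}),$ so $\sum_t\zeta_{p^r}^{at}=\zeta_{p^r}^a\sum_{m=0}^{p^{r-s}-1}(\zeta_{p^r}^{p^s})^m,$ a geometric sum over all $p^{r-s}$-th roots of unity, which is $0$ because $\zeta_{p^r}^{p^s}=\zeta_{p^{r-s}}$ is a primitive root of unity of order $p^{r-s}>1.$

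The main obstacle, such as it is, will be bookkeeping the identification of the Galois group with the right congruence subgroup and verifying that the exponents sweep out a full coset so that the geometric series telescopes to zero; once that coset structure is pinned down, both parts reduce to the standard fact that the cyclotomic field $\mathbb Q(\zeta_{p^n})$ has cyclic Galois group for odd $p$ together with the vanishing of a full sum of roots of unity. I would be careful to state explicitly where oddness of $p$ is used, since part (1) genuinely fails when $p=2.$
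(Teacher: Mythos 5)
Your proof is correct and follows essentially the same route as the paper: part (1) via the cyclicity of $\mathrm{Gal}(\mathbb Q(\zeta_{p^n})/\mathbb Q(\zeta_p))$ for odd $p$ (so a unique subgroup, hence a unique intermediate field, at each level), and part (2) by identifying the relative Galois group with the residues congruent to $1$ mod $p^s$ and showing the resulting sum of roots of unity vanishes. The only cosmetic difference is that the paper evaluates $\sum_j\zeta^{1+p^sj}$ directly as the geometric sum $\zeta(\zeta^{p^r}-1)/(\zeta^{p^s}-1)=0,$ whereas you track the exponents $at$ through a coset of $p^s\mathbb Z/p^r\mathbb Z$ before summing --- the same computation with slightly more bookkeeping.
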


\begin{proof}
The fields $\mathbb Q(\zeta_{p^r})$ for $1\leq r\leq n$ clearly lie between $\mathbb Q(\zeta_p)$ and $\mathbb Q(\zeta_{p^n}).$ On the other hand, the Galois group of $\mathbb Q(\zeta_{p^n})/\mathbb Q(\zeta_p)$ is the subgroup of $(\mathbb Z/p^n\mathbb Z)^\times$ consisting of classes that are congruent to $1$ mod $p.$ As $p$ is odd it is cyclic, generated for example by the class of $1+p.$ Since a cyclic $p$-group has at most one subgroup of each order, it follows that there are no other intermediate fields. For part $(2),$ since $r>s,$ we have $\zeta^{p^s}\neq 1.$ Therefore $$\mathrm{tr}_{\mathbb Q(\zeta_{p^r})/\mathbb Q(\zeta_{p^s})}(\zeta)=\mathop{\sum_{n\in(\mathbb Z/p^r\mathbb Z)^\times}}_{n\equiv 1\text{ mod } p^s}\zeta^n=\sum_{j=0}^{p^{r-s}-1}\zeta^{1+p^sj}=\zeta\frac{\zeta^{p^r}-1}{\zeta^{p^s}-1}=0,$$ as required.
\end{proof}

\begin{lem}\label{qchilem}
Let $P$ be a $p$-group, where $p$ is odd. Let $\chi\in\mathrm{Irr}(P).$ Then $\mathbb Q(\chi)=\mathbb Q(\zeta_{p^r}),$ where $p^r\leq {|P|/\chi(1)^2}.$
\end{lem}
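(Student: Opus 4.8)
The plan is to combine two facts: that the field of values $\mathbb{Q}(\chi)$ is one of the cyclotomic fields $\mathbb{Q}(\zeta_{p^r})$, and that the Galois conjugates of $\chi$ are distinct irreducible characters of the same degree, so the sum of the squares of their degrees is at most $|P|$.

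First I would record that all values of $\chi$ lie in $\mathbb{Q}(\zeta_{p^n})$, where $p^n=\exp(P)$, and that $\Gamma:=\mathrm{Gal}(\mathbb{Q}(\zeta_{p^n})/\mathbb{Q})\cong(\mathbb{Z}/p^n\mathbb{Z})^\times$ is cyclic because $p$ is odd. For $\sigma_t\in\Gamma$ (with $\sigma_t(\zeta_{p^n})=\zeta_{p^n}^t$) one has $\chi^{\sigma_t}(g)=\chi(g^t)$, so $\sigma_t$ fixes $\chi$ if and only if the class map $g\mapsto g^t$ fixes $\chi$. Let $S\leq\Gamma$ be the stabiliser of $\chi$, so that $\mathbb{Q}(\chi)$ is its fixed field. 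By Lemma \ref{fieldlem}(1), to prove $\mathbb{Q}(\chi)=\mathbb{Q}(\zeta_{p^r})$ it suffices to show that either $\chi$ is rational or $\mathbb{Q}(\zeta_p)\subseteq\mathbb{Q}(\chi)$; equivalently, that $S$ is contained in $U:=\mathrm{Gal}(\mathbb{Q}(\zeta_{p^n})/\mathbb{Q}(\zeta_p))$, the Sylow $p$-subgroup of $\Gamma$, whenever $\chi$ is nontrivial.

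The crux, and the step I expect to be the main obstacle, is this last claim. I would argue that if $\sigma_t$ fixes $\chi$ with $t\not\equiv 1\pmod p$, then $\chi$ is trivial. By Brauer's permutation lemma (\cite{Isaacs1976}), the number of irreducible characters fixed by $\sigma_t$ equals the number of conjugacy classes of $P$ fixed by $g\mapsto g^t$. If $g^t$ is conjugate to $g$, say $g^t=x^{-1}gx$, then conjugation by $x$ induces on $\langle g\rangle$ the automorphism $y\mapsto y^t$, whose order equals the order of $t$ modulo $|g|$. Since $t\not\equiv 1\pmod p$, for $g\neq 1$ this order is divisible by a prime $q\mid(p-1)$ with $q\neq p$; but $x$ has $p$-power order, so it induces an automorphism of $p$-power order, a contradiction. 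Hence $g=1$, the trivial character is the only fixed one, and so a nontrivial $\chi$ forces $S\leq U$. Applying this to every nontrivial prime-to-$p$ element of $S$ (each has a power of prime order $q\mid(p-1)$, which reduces nontrivially mod $p$) gives $S\leq U$, and Lemma \ref{fieldlem}(1) then yields $\mathbb{Q}(\chi)=\mathbb{Q}(\zeta_{p^r})$ for some $0\leq r\leq n$.

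Finally I would convert the field computation into the numerical bound. The Galois conjugates of $\chi$ form its $\Gamma$-orbit, of size $[\mathbb{Q}(\chi):\mathbb{Q}]=\phi(p^r)=p^{r-1}(p-1)$ (interpreted as $1$ when $r=0$); they are distinct irreducible characters all of degree $\chi(1)$, so $p^{r-1}(p-1)\chi(1)^2\leq\sum_{\psi\in\mathrm{Irr}(P)}\psi(1)^2=|P|$. Writing $|P|=p^m$ and $\chi(1)=p^e$, the quantity $|P|/\chi(1)^2=p^{m-2e}$ is a power of $p$. For $r\geq 1$, since $p-1>1$ the inequality $p^{r-1}(p-1)\leq p^{m-2e}$ forces $p^{r-1}<p^{m-2e}$, hence $r\leq m-2e$ and $p^r\leq|P|/\chi(1)^2$; the case $r=0$ is immediate from $\chi(1)^2\leq|P|$. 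This completes the proof.
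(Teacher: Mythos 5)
Your proof is correct and follows essentially the same route as the paper: identify $\mathbb Q(\chi)$ as $\mathbb Q(\zeta_{p^r})$ via Lemma \ref{fieldlem}(1), then bound $p^r$ by counting the $p^{r-1}(p-1)$ distinct Galois conjugates of $\chi$, each contributing $\chi(1)^2$ to $|P|=\sum_\psi\psi(1)^2$. The only difference is that the paper simply asserts $\mathbb Q(\zeta_p)\subseteq\mathbb Q(\chi)$ for nontrivial $\chi$, whereas you supply the standard justification via Brauer's permutation lemma --- a welcome filled-in detail, not a divergence.
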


\begin{proof}
We can assume that $\chi\neq 1.$ Then $\mathbb Q(\zeta_p)\subseteq\mathbb Q(\chi)\subseteq\mathbb Q(\zeta_{p^n}),$ where $p^n=|P|.$ By Lemma \ref{fieldlem}, $\mathbb Q(\chi)=\mathbb Q(\zeta_{p^r})$ for some $r\leq n.$ Then $\chi$ has $|\mathbb Q(\chi):\mathbb Q|=p^{r-1}(p-1)$ Galois conjugates, so $p^{r-1}(p-1)\chi(1)^2\leq|P|-1.$ Since $|P|/\chi(1)^2$ is a power of $p,$ it follows that $p^r\leq |P|/\chi(1)^2$ as required.
\end{proof}

\begin{lem}\label{corefree}
Let $P$ be a $p$-group, where $p$ is odd. Suppose $\chi\in\mathrm{Irr}(P)$ is faithful. Let $x\in P$ and suppose $\chi(x)\neq 0.$ Let $C=\langle x\rangle.$ If the order of $x$ does not divide $|P|/\chi(1)^2$ then $C\cap C^g=1$ for some $g\in P.$
\end{lem}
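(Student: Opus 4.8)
The plan is to translate the condition ``$C\cap C^g=1$ for some $g$'' into a statement about a single subgroup of order $p,$ and then to contradict $\chi(x)\neq 0$ using the description of $\mathbb Q(\chi)$ from Lemma \ref{qchilem}. Write $|x|=p^s$ and let $Z$ be the unique subgroup of order $p$ of the cyclic group $C=\langle x\rangle,$ so $Z=\langle x^{p^{s-1}}\rangle.$ The key reduction I would prove first is that $C\cap C^g=1$ for some $g\in P$ if and only if $Z$ is not normal in $P.$

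To establish this, fix $g\in P$ and note that $C\cap C^g$ is a subgroup of the cyclic group $C,$ hence is either trivial or contains the unique minimal subgroup $Z;$ being also a subgroup of $C^g,$ if nontrivial it contains the minimal subgroup $Z^g$ of $C^g.$ As a cyclic $p$-group has a unique subgroup of order $p,$ the containments $Z,Z^g\subseteq C\cap C^g$ force $Z=Z^g.$ Conversely $Z=Z^g$ gives $Z\subseteq C\cap C^g.$ Thus $C\cap C^g\neq 1$ exactly when $g\in N_P(Z),$ and so some conjugate satisfies $C\cap C^g=1$ precisely when $N_P(Z)\neq P.$ It therefore suffices to show that if $Z$ is normal in $P,$ then $p^s$ divides $|P|/\chi(1)^2.$

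Assume for contradiction that $Z$ is normal but $p^s\nmid|P|/\chi(1)^2.$ A normal subgroup of order $p$ in a $p$-group is central (the conjugation action of $P$ on the $p-1$ nonidentity elements of $Z$ has orbits of $p$-power size, forcing a fixed nonidentity element), so $Z\subseteq\mathbf Z(P);$ since $\chi$ is faithful, $\chi_Z=\chi(1)\nu$ for a faithful linear character $\nu$ of $Z,$ whence $\chi(xz)=\nu(z)\chi(x)$ for all $z\in Z.$ Meanwhile Lemma \ref{qchilem} gives $\mathbb Q(\chi)=\mathbb Q(\zeta_{p^r})$ with $p^r\leq|P|/\chi(1)^2,$ so our assumption yields $r<s.$ Using the standard fact that the automorphism $\zeta\mapsto\zeta^t$ of the relevant cyclotomic field sends $\chi(x)$ to $\chi(x^t),$ the character $\chi$ is fixed by every such automorphism with $t\equiv 1\pmod{p^r},$ i.e.\ $\chi(x^t)=\chi(x)$ for all $t\equiv 1\pmod{p^r}.$ Taking $t=1+p^{s-1}$ (which is $\equiv 1\pmod{p^r}$ since $r\leq s-1$) and writing $z_0=x^{p^{s-1}},$ a generator of the central subgroup $Z,$ we get $x^t=xz_0$ and hence $\chi(x)=\chi(x^t)=\chi(xz_0)=\nu(z_0)\chi(x).$ As $\chi(x)\neq 0$ this forces $\nu(z_0)=1,$ contradicting the faithfulness of $\nu.$

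I expect the main obstacle to be the opening reduction: recognizing that whether $C$ can be conjugated off itself is controlled entirely by the normality of its order-$p$ socle $Z$ is what converts the problem into one about a central, faithful linear character, after which the Galois input of Lemma \ref{qchilem} and a one-line central-character computation finish it. The only case needing a second glance is $s=1,$ where $Z=C$ and $t=2;$ here $\gcd(t,p)=1$ because $p$ is odd, so the same identities apply and show where the hypothesis that $p$ is odd is used.
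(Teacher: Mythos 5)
Your proof is correct, and it shares the paper's overall skeleton --- reduce everything to whether the unique order-$p$ subgroup $Z$ of $C$ is normal in $P$, then exploit Lemma \ref{qchilem}'s bound $\mathbb Q(\chi)=\mathbb Q(\zeta_{p^r})$ with $p^r\leq|P|/\chi(1)^2$ --- but the decisive step is carried out differently. The paper argues directly: since $e>f$, the relative trace $\mathrm{tr}_{\mathbb Q(\zeta_{p^e})/\mathbb Q(\zeta_{p^f})}$ kills every primitive $(p^e)^{\mathrm{th}}$ root of unity (Lemma \ref{fieldlem}$(2)$), so $\chi(x)\neq 0$ forces some constituent of $\chi_C$ to be non-faithful, hence $[\chi,1]_D>0$ for the socle $D,$ hence $D$ is not normal because $\chi$ is faithful. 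You instead argue by contradiction: if $Z$ were normal it would be central, $\chi_Z=\chi(1)\nu$ with $\nu$ faithful, and the Galois automorphism $\zeta\mapsto\zeta^{1+p^{s-1}}$ fixes $\mathbb Q(\chi)$ pointwise while sending $\chi(x)$ to $\chi(xz_0)=\nu(z_0)\chi(x),$ forcing $\nu(z_0)=1.$ Your route trades the explicit relative-trace computation for the standard equivariance fact $\chi(x)^\sigma=\chi(x^t)$ (not stated in the paper, but classical) plus the observation that a normal subgroup of order $p$ is central; the paper's route extracts slightly more information (a non-faithful eigenvalue of the restriction to $C$) without ever needing centrality. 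One small inaccuracy in your closing remark: the oddness of $p$ is not really consumed by $\gcd(2,p)=1$ in the $s=1$ case (which is in fact vacuous, since $r\geq 1$ for $\chi\neq 1$ forces $s\geq 2$ under your contradiction hypothesis); it enters through Lemma \ref{qchilem}, which fails for $p=2$ because $\mathbb Q(\chi)$ need not be a full cyclotomic field --- exactly the $D_{16}$ phenomenon, where $\sigma_t$ does not fix $\mathbb Q(\sqrt 2).$
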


\begin{proof}
Let $p^e$ be the order of $x.$ By Lemma \ref{fieldlem}$(1)$ and Lemma \ref{qchilem}, $\mathbb Q(\chi)=\mathbb Q(\zeta_{p^f})$ where $p^f\leq |P|/\chi(1)^2.$ We can write $$|\mathbb Q(\zeta_{p^e}):\mathbb Q(\zeta_{p^f})|\chi(x)=\mathrm{tr}_{\mathbb Q(\zeta_{p^e})/\mathbb Q(\zeta_{p^f})}\bigl(\chi(x)\bigr).$$ By hypothesis, $e>f$ and $\chi(x)$ is not zero. In view of Lemma \ref{fieldlem}$(2),$ $\chi(x)$ is not a sum of primitive $(p^e)^\mathrm{th}$ roots of unity. Hence, some $\lambda\in\mathrm{Irr}(C)$ lying under $\chi,$ is not faithful. Therefore, if $D$ is the unique subgroup of $C$ of order $p,$ we have $[\chi,1]_D>0,$ so $D$ cannot be normal in $P$ since $\chi$ is faithful. Since $C$ is cyclic, there must be $g\in P$ such that $C\cap C^g$ does not contain $D.$ Then $C\cap C^g=1,$ as required. \end{proof}

\begin{proof}[Proof of Theorem A$(2)$]
Let $C$ be the cyclic group generated by $x.$ We have to prove that $|C|\leq |P|/\chi(1)^{3/2}.$ If $K=\ker(\chi)>1$ then by induction on $|P|,$ $|CK/K|\leq |P/K|/\chi(1)^{3/2},$ and the result follows. Hence, we may assume $\chi$ is faithful. By Lemma \ref{corefree}, either $|C|$ divides $|P|/\chi(1)^2$ or there exists $g\in P$ with $C\cap C^g=1.$ We may assume the latter holds. By Lemma \ref{nM}, there is a normal subgroup $A\vartriangleleft P$ and a linear character $\lambda\in\mathrm{Irr}(A)$ with $\lambda^P=\chi.$ Since $\chi(x)\neq 0,$ $C\subseteq A.$ Then as $C\cap C^g=1,$ $|C|^2\leq |A|=|P|/\chi(1).$ Since $\chi(1)^2\leq |P|,$ it follows that $|C|^2\leq |P|^2/\chi(1)^3,$ as required.
\end{proof}

\section{Groups of nilpotence class 3}
In this section, we will prove Theorem B. The proof of parts $(3)$ and $(4)$ depends on an important theorem of I.M. Isaacs \cite{Isaacs1973}. Recall that if $G$ is a group with a normal subgroup $N,$ then $\chi\in\mathrm{Irr}(G)$ and $\varphi\in\mathrm{Irr}(N)$ are said to be fully ramified with respect to $G/N$ if $\chi_N=e\varphi$ with $e^2=|G:N|.$ Following \cite{Isaacs1973}, we call a set $(G,K,L,\theta,\varphi)$ of groups and characters a \emph{character five} if $K$ and $L$ are normal subgroups of the finite group $G$ with $L\subseteq K,$ and $\theta\in\mathrm{Irr}(K)$ and $\varphi\in\mathrm{Irr}(L)$ are $G$-invariant characters and are fully ramified with respect to $K/L.$ The following only includes what we need for the proof of Theorem B; see \cite{Isaacs1973} for the full statement.

\begin{thm}[I.M. Isaacs. See Theorem 9.1 in \cite{Isaacs1973}]\label{FRC}
Let $(G,K,L,\theta,\varphi)$ be a character five. Suppose $K/L$ is abelian and $|K:L|$ is odd. Then there is a subgroup $U\subseteq G,$ and a (reducible, in general) character $\Psi$ of $G,$ such that 
\begin{enumerate}
\item $KU=G$ and $K\cap U=L.$
\item $\Psi(1)^2=|K/L|$ and $\Psi$ has no zeros on $G.$
\item For each $\chi\in\mathrm{Irr}(G\vert\theta)$ there exists $\xi\in\mathrm{Irr}(U\vert\varphi)$ such that $\chi_U=\Psi_U\xi$ and $\xi^G=\bar\Psi\chi.$
\end{enumerate}
\end{thm}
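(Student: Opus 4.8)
The plan is to read off the symplectic geometry that the fully ramified structure imposes on $V=K/L$ and then to build $U$ and $\Psi$ from the associated Weil (Heisenberg) representation, using the odd-order hypothesis at the one crucial point. First I would record the data. Since $\varphi$ is $G$-invariant it extends to an irreducible projective representation $\mathcal{P}$ of $K$; because $K/L$ is abelian, for $a,b\in K$ the commutator $\mathcal{P}(a)\mathcal{P}(b)\mathcal{P}(a)^{-1}\mathcal{P}(b)^{-1}$ is a scalar $\langle\bar{a},\bar{b}\rangle\in\mathbb{C}^\times$, and this defines an alternating form on $V$. The fully ramified condition $\theta_L=e\varphi$ with $e^2=|K{:}L|$ is equivalent to this form being nondegenerate, i.e. symplectic, and then $\theta$ is the unique irreducible character of $K$ over $\varphi$, of degree $e=\sqrt{|V|}$. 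Conjugation by $G$ preserves $\varphi$ and hence the form, so we obtain a homomorphism $G/K\to\mathrm{Sp}(V)$.

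The crux is the construction of the complement $U$ and the magic character $\Psi$, and this is exactly where $|K{:}L|$ odd is indispensable. Extend $\varphi$ to an irreducible projective representation of all of $G$ with factor set $\mu$ on $G/L$; on $K/L$ this $\mu$ is the Heisenberg cocycle of the form above. For odd $|V|$ the Heisenberg cocycle has a canonical symmetric square root, since squaring is a bijection of the group of roots of unity of order dividing $\exp(V)$; this symmetrisation lets me choose a complement $U/L$ to $K/L$ in $G/L$ on which $\mu$ becomes a coboundary. I would take this $U$, so that $KU=G$ and $K\cap U=L$, which is conclusion (1), and I would let $\Psi$ be the canonically normalised character of degree $e$ furnished by the Heisenberg/Weil representation relative to this splitting. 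Then $\Psi(1)^2=|K/L|$, and the nonvanishing of $\Psi$ in conclusion (2) follows from the explicit Gauss-sum shape of the Weil character on an odd symplectic space, whose values are roots of unity times nonzero real square roots.

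Granting $U$ and $\Psi$, conclusion (3) should be routine Clifford theory relative to the splitting. Restricting $\chi\in\mathrm{Irr}(G\vert\theta)$ to $U$ and using $K\cap U=L$ together with the factorisation of the projective extension gives $\chi_U=\Psi_U\xi$ for a uniquely determined $\xi\in\mathrm{Irr}(U\vert\varphi)$; Frobenius reciprocity combined with $KU=G$ then yields $\xi^G=\bar{\Psi}\chi$. The degree bookkeeping is consistent because $|G{:}U|=|K{:}L|=\Psi(1)^2$, so $\xi^G(1)=e^2\xi(1)=e\,\chi(1)=(\bar{\Psi}\chi)(1)$.

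The main obstacle is entirely the middle paragraph: producing $U$ and a genuinely nonvanishing $\Psi$ on all of $G$ in a canonical way. This is the technical heart of Isaacs' symplectic character theory, and it is precisely where oddness cannot be dropped — without the square-root/symmetrisation step the factor set need not trivialise on any complement and $\Psi$ may acquire zeros, as the dihedral example following Theorem B shows in spirit. I would expect most of the work to lie in setting up the Weil representation of $\mathrm{Sp}(V)$ over $\mathbb{Z}/\exp(V)$ and checking that the action of $G/K$ lifts compatibly, which is what upgrades $\Psi$ from a mere projective character to an honest character of $G$.
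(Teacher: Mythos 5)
This statement is not proved in the paper at all: it is quoted (in abridged form) from Isaacs' 1973 paper, with an explicit citation to Theorem 9.1 there, so there is no in-paper argument to compare yours against. Judged on its own terms, your sketch correctly identifies the architecture of Isaacs' proof --- the nondegenerate alternating form on $V=K/L$ arising from the fully ramified pair $(\theta,\varphi)$, the induced map $G/K\to\mathrm{Sp}(V)$, the Weil/Heisenberg representation, and the use of oddness to take canonical square roots --- but it does not actually establish the theorem.

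The two genuine gaps are precisely the two central conclusions. First, the existence of the complement $U$: you say the symmetrisation ``lets me choose a complement $U/L$ to $K/L$ in $G/L$,'' but nothing in your argument produces one. In the application in this paper $G$ is a $p$-group, so $|K/L|$ and $|G/K|$ are not coprime and Schur--Zassenhaus is unavailable; the splitting of $G/L$ over $K/L$ is a nontrivial consequence of the fully ramified hypothesis and occupies a substantial part of Isaacs' development. Second, $\Psi$ must be a genuine, everywhere-nonvanishing character of all of $G$, not merely a projective character of $G/K$ or a class function with Gauss-sum values; showing that the projective Weil representation linearizes compatibly with the chosen splitting, and that its values are nonzero, is again the substance of the theorem, which your sketch asserts rather than proves. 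You are candid that the middle paragraph is the obstacle; unfortunately that middle paragraph \emph{is} the theorem, which is why the paper cites Isaacs rather than reproving it.
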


The next lemma is essentially Problem 3.12 in \cite{Isaacs1976}.

\begin{lem}\label{lem1}
Let $G$ be a finite group and let $\chi\in\mathrm{Irr}(G).$ Suppose $N$ is a normal subgroup of $G$ such that $\chi_N$ is irreducible. Then for $x\in G,$ the following formula holds: $$|\chi(x)|^2=\frac{\chi(1)}{|N|}\sum_{g\in N}\chi([x,g]).$$
\end{lem}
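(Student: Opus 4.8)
The plan is to work with an explicit matrix representation and collapse the whole identity to a single application of Schur's lemma. Fix a representation $\rho$ affording $\chi,$ so that $\chi(y)=\mathrm{tr}\,\rho(y)$ for all $y\in G.$ Since $[x,g]=x^{-1}g^{-1}xg,$ I would first rewrite each commutator character value as a matrix trace:
$$\chi([x,g])=\mathrm{tr}\bigl(\rho(x)^{-1}\rho(g)^{-1}\rho(x)\rho(g)\bigr).$$
Summing over $g\in N$ and factoring $\rho(x)^{-1}$ out of the trace, the right-hand side of the desired formula (up to the constant $\chi(1)/|N|$) becomes $\mathrm{tr}\bigl(\rho(x)^{-1}T\bigr),$ where $T=\sum_{g\in N}\rho(g)^{-1}\rho(x)\rho(g).$ Everything then reduces to identifying the matrix $T.$

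The key step is to evaluate $T$ using the hypothesis that $\chi_N$ is irreducible. For any fixed $h\in N,$ as $g$ ranges over $N$ so does $gh,$ and the substitution $\rho(h)^{-1}\rho(g)^{-1}\rho(x)\rho(g)\rho(h)=\rho(gh)^{-1}\rho(x)\rho(gh)$ shows that $\rho(h)^{-1}T\rho(h)=T.$ Hence $T$ commutes with $\rho(g)$ for every $g\in N.$ Because $\rho$ restricted to $N$ is irreducible, Schur's lemma forces $T=\mu I$ for some scalar $\mu.$ Taking traces and using that the trace is invariant under conjugation gives $\mu\,\chi(1)=\mathrm{tr}\,T=|N|\,\mathrm{tr}\,\rho(x)=|N|\chi(x),$ so that $T=\frac{|N|\chi(x)}{\chi(1)}I.$

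Finally I would substitute this value back in and take the remaining trace:
$$\sum_{g\in N}\chi([x,g])=\mathrm{tr}\Bigl(\rho(x)^{-1}\cdot\tfrac{|N|\chi(x)}{\chi(1)}I\Bigr)=\frac{|N|\chi(x)}{\chi(1)}\,\mathrm{tr}\,\rho(x)^{-1}=\frac{|N|\chi(x)\overline{\chi(x)}}{\chi(1)}=\frac{|N|\,|\chi(x)|^2}{\chi(1)},$$
where I use $\chi(x^{-1})=\overline{\chi(x)}.$ Rearranging gives exactly the claimed formula. The only step with genuine content is the computation of $T,$ and the irreducibility of $\chi_N$ enters precisely there, as it is what licenses the conclusion that $T$ is scalar; the rest is bookkeeping with traces, so I do not anticipate a serious obstacle.
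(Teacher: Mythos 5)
Your proof is correct and is essentially the paper's argument in different clothing: the paper applies the central character of $\chi_H$, where $H=N\langle x\rangle,$ to the class sum of $x^N$ in $Z(\mathbb CH)$ --- which is exactly your operator $T$ up to the factor $|C_N(x)|$ --- and then takes the trace against $\rho(x^{-1})$ just as you do. The only cosmetic difference is that you invoke Schur's lemma for $\rho|_N$ directly (via the $N$-invariance of $T$) rather than passing through the group algebra of $N\langle x\rangle.$
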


\begin{proof}
Fix $x\in G$ and let $H=N\langle x\rangle.$ Clearly $\chi_H\in\mathrm{Irr}(H).$ Let $\rho$ be a representation of $\mathbb CH$ affording $\chi_H$ and let $\mathcal C_x\in Z(\mathbb CH)$ be the sum over the conjugacy class $x^H=x^N.$ Then $\rho(\mathcal C_x)=\omega(x)I,$ where $I$ is the identity matrix and $\omega(x)=\vert x^N\vert\chi(x)/\chi(1).$ Hence $\rho(x^{-1}\mathcal C_x)=\omega(x)\rho(x^{-1}).$ Taking the matrix trace of both sides of this equation gives $\omega(x)\chi(x^{-1})=\sum_{y\in x^N}\chi(x^{-1}y),$ which is equivalent to the required result.
\end{proof}

We also need the following, which will provide the implication $(3)\implies (4)$ in the proof of Theorem B.

\begin{lem}\label{tD}
Let $P$ be a $p$-group, where $p$ is any prime. Let $\chi\in\mathrm{Irr}(P)$ and let $H$ be a subgroup of $P.$ Suppose $\chi$ is not zero at any element of $H.$ Then $|H|$ divides $|P|/\chi(1)^2.$
\end{lem}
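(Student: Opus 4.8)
The plan is to prove the equivalent inequality $\chi(1)^2\le[P:H]$; this is equivalent to the stated divisibility because $|H|$, $\chi(1)$ and $|P|$ are all powers of $p$, so $|H|\chi(1)^2\mid|P|$ if and only if $|H|\chi(1)^2\le|P|$. I would argue by induction on $|P|$. First I would dispose of the non-faithful case: writing $K=\ker(\chi)$ and letting $\overline\chi\in\mathrm{Irr}(P/K)$ be the faithful character that $\chi$ lifts, the hypothesis says $\overline\chi$ is non-zero on $HK/K$, so by induction $\overline\chi(1)^2\le[P/K:HK/K]=[P:HK]$; since $\chi(1)=\overline\chi(1)$ and $[P:H]=[P:HK]\,[HK:H]\ge[P:HK]$, the claim follows. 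Hence I may assume $\chi$ is faithful. If $\chi(1)=1$ there is nothing to prove, so assume $\chi(1)>1$; then $\chi$ is nonlinear, so by Burnside's theorem $\chi$ vanishes somewhere on $P$, which forces $H\ne P$.

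Next I would run the inductive step through a maximal subgroup. Choose a maximal subgroup $M$ with $H\le M$ (possible since $H<P$); then $M\trianglelefteq P$ and $[P:M]=p$. Since the index is $p$, Clifford theory gives two possibilities: either $\chi_M\in\mathrm{Irr}(M)$, or $\chi=\beta^P$ for some $\beta\in\mathrm{Irr}(M)$ and $\chi$ vanishes on $P\setminus M$. In the first case $\chi_M$ is an irreducible character of $M$ that is non-zero on $H$, and induction applied to $(M,\chi_M,H)$ yields $\chi(1)^2\le[M:H]<[P:H]$, which is even stronger than required. Moreover this favourable case can be arranged whenever possible: a maximal $M\supseteq H$ with $\chi_M$ reducible must satisfy $\mathrm{Supp}(\chi)\subseteq M$, so if every maximal subgroup containing $H$ were of this type then $\mathrm{Supp}(\chi)$ would lie in $\bigcap_{M\supseteq H}M=H\Phi(P)$.

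This leaves the essential case, which I expect to be the main obstacle: $\mathrm{Supp}(\chi)\subseteq H\Phi(P)$. Here the vanishing-off subgroup $V=\langle\mathrm{Supp}(\chi)\rangle$ is a normal subgroup of $P$ satisfying $H\le V\le H\Phi(P)$, the character $\chi$ vanishes off $V$, and $\chi=\vartheta^P$ for some $\vartheta\in\mathrm{Irr}(V)$, with $\chi(1)=[P:V]\vartheta(1)$ and $\chi_V=\sum_j\vartheta_j$ a sum of $[P:V]$ distinct $P$-conjugates of $\vartheta$. The difficulty is that the hypothesis only tells us that $\sum_j\vartheta_j(h)\ne0$ for each $h\in H$, and this does not transparently transfer the non-vanishing to a single constituent $\vartheta_j$ to which induction on $(V,\vartheta)$ could be applied. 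I would attack this by combining the inductive bound for $\vartheta$ on a suitable subgroup of $V$ with the extra factor $[P:V]$ coming from $\chi(1)=[P:V]\vartheta(1)$, controlling the discrepancy by an inner-product computation of $\sum_{h\in H}|\chi(h)|^2$ inside $V$ (note that $[\chi_V,\chi_V]_V=[P:V]$ since $\chi$ vanishes off $V$); the tight examples, such as extraspecial groups with $H=\mathbf Z(P)$, show that the bound is sharp precisely in this case and so must be treated with care.
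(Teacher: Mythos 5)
Your reduction steps are fine as far as they go (the equivalence of divisibility with the inequality $|H|\chi(1)^2\le|P|$, the passage to $P/\ker\chi$, and the easy case where some maximal subgroup $M\supseteq H$ has $\chi_M$ irreducible), but the argument stops exactly where the real content of the lemma lies. In the remaining case, where $\chi$ vanishes off every maximal subgroup containing $H$ and hence $\mathrm{Supp}(\chi)\subseteq H\Phi(P)$, you offer only a plan (``I would attack this by combining\dots''), not a proof; nothing is actually established there. Worse, the structural claim you base that plan on is false: if $V=\langle\mathrm{Supp}(\chi)\rangle$ is the vanishing-off subgroup, it is not true in general that $\chi=\vartheta^P$ with $\chi_V$ a multiplicity-free sum of $|P:V|$ conjugates. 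For $P$ extraspecial of order $p^3$ and $\chi$ faithful of degree $p$ one has $V=\mathbf Z(P)$ and $\chi_V=p\lambda$ for a single linear $\lambda$, so $\chi$ is fully ramified over $V$ rather than induced from it. Since by Clifford theory one only knows $\chi_V=e\sum_j\vartheta_j$ with $e^2t=|P:V|$, and the hypothesis gives the non-vanishing of the sum $e\sum_j\vartheta_j$ on $H$ rather than of any single $\vartheta_j$, the inductive handle you hoped for is genuinely absent, and the proposal does not close.

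For comparison, the paper's proof sidesteps this case analysis entirely. It fixes a chief series $P=N_0\supset N_1\supset\cdots\supset N_n=1$ and compares, at each step of index $p$, the jump $e_i=[\chi,\chi]_{N_i}/[\chi,\chi]_{N_{i-1}}\in\{1,p\}$ with the jump $f_i=|HN_{i-1}:HN_i|\in\{1,p\}$: one has $e_i=p$ exactly when $\chi$ vanishes on $N_{i-1}\setminus N_i$, which (because $\chi$ is non-zero on $H$) forces $H\cap N_{i-1}=H\cap N_i$ and hence $f_i=p$. Multiplying over $i$ gives $\chi(1)^2=\prod e_i\mid\prod f_i=|P:H|$. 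If you want to salvage your approach, you would need an argument of comparable strength for the fully ramified/mixed case over $V$; as written, the proposal proves the lemma only in the cases where it is far from sharp.
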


\begin{proof}
Let $P=N_0\supset N_1\ldots\supset N_n=1$ be a chief series of $P.$ For $1\leq i\leq n,$ let $e_i=[\chi,\chi]_{N_i}/[\chi,\chi]_{N_{i-1}}.$ The $e_i$ are integers because $[\chi,\chi]_{N_i}$ is a power of $p$ for each $i$ with $0\leq i\leq n,$ and $[\chi,\chi]_{N_i}\geq [\chi,\chi]_{N_{i-1}}.$ Since $[\chi,\chi]_{N_i}\leq p[\chi,\chi]_{N_{i-1}},$ it follows that for each $1\leq i\leq n,$ either $e_i=1$ or $e_i=p,$ and also clearly $e_i=p$ if and only if $\chi$ vanishes on $N_{i-1}\backslash N_i.$ 

Now let $f_i=|HN_{i-1}:HN_i|.$ Then $f_i=p$ if $H\cap N_i=H\cap N_{i-1}$ and $f_i=1$ otherwise. Since $\chi$ does not vanish on $H$, $e_i=p$ only if $f_i=p.$ In particular, $e_i$ divides $f_i$ for each $i,1\leq i\leq n.$ Hence $\chi(1)^2=\prod_{i=1}^ne_i$ divides $|P:H|=\prod_{i=1}^nf_i,$  which is the required result.
\end{proof}

\begin{proof}[Proof of Theorem B]
We first show that $(1)$ implies $(2)$ and $(3)$ implies $(4).$ Let $k$ be the number of conjugacy classes in $\mathrm{Supp}(\chi)$ and let $x_i,$ for $1\leq i\leq k$ be representatives of those classes. Let $h_i$ be the size of the conjugacy class of $x_i.$ As $\chi$ is faithful, assuming $(1)$ gives $h_i|\chi(x_i)|^2=\chi(1)^2$ for each $i.$ Hence $|P|=\sum_ih_i|\chi(x_i)|^2=k\chi(1)^2,$ so $k=|P|/\chi(1)^2$ which is $(2).$ Also $(4)$ follows from $(3)$ and Lemma \ref{tD} applied to the cyclic group generated by $x.$

To complete the proof of Theorem B, we need to prove parts $(1)$ and $(3).$ We can clearly assume $\chi$ is faithful. Since $P$ has nilpotence class $3,$ it is metabelian, so by Lemma \ref{nM} we may choose a normal subgroup $A\vartriangleleft P$ with $P/A$ abelian and a linear character $\lambda\in\mathrm{Irr}(A)$ such that $\lambda^G=\chi.$ Since $\chi$ is faithful, $A$ is abelian. Note that $x\in A$ since $\chi$ vanishes off $A.$

Let $B=\langle x^P\rangle$ and $X=C_P(\lambda_B).$ Then $X\supseteq A,$ so $X\vartriangleleft P$ and hence also $[B,X]\vartriangleleft P.$ Also $\lambda([B,X])=1$ by definition of $X,$ and we conclude that $[B,X]\subseteq\ker\chi.$ Since $\chi$ is faithful, this implies that $X$ centralizes $B.$ Hence, writing $H=P/X,$ we can form the semidirect product $P_0=BH$ corresponding to the natural action of $H$ on $B.$ 

Let $\chi_0=(\lambda_B)^{P_0}.$ Since $X=C_P(\lambda_B)$ we see that $\chi_0\in\mathrm{Irr}(P_0).$ Also $(\chi_0)_B/\chi_0(1)=\chi_B/\chi(1);$ in particular, this shows that $\chi_0$ is faithful. Also, $h(x)=|H|$ whether computed in $P$ or $P_0.$ Hence if parts $(1)$ and $(3)$ of Theorem B are true for $P_0,$ $\chi_0$ and $x$ (regarded as an element of $P_0$), then those statements also hold for $P,$ $\chi$ and $x.$ We can therefore work with $P_0$ and $\chi_0$ for the remainder of the proof. For ease of notation, we will write $P_0=P$ and $\chi_0=\chi$ from now on.

Set $N=[B,H]H. $ Then $N\vartriangleleft P.$ Since $B=\langle x^H\rangle,$ it follows that $P=BH=\langle x\rangle N.$ Let $\varphi\in\mathrm{Irr}(N)$ be a constituent of $\chi_N$ and let $J$ be the stabilizer of $\varphi.$ Then $N\subseteq J\vartriangleleft G$ and $\chi$ is induced from a character of $J;$ since $\chi(x)\neq 0$ this implies $x\in J$ and so $J=P.$ Therefore $\chi_P$ is homogeneous. Since $P/N$ is cyclic, it follows that $\chi_N$ is irreducible. We have $[N,N]=[B,H,H],$ so since $P$ has nilpotence class $3,$ $[N,N]\subseteq Z(P).$ Since $\chi_N\in\mathrm{Irr}(N)$ is faithful, $\chi_N$ is fully ramified over $Z(P)$ and in particular, $\chi_N$ vanishes on $N\backslash Z(P)$ (see Theorem 2.31 in \cite{Isaacs1976}). Then by Lemma \ref{lem1}, $$|\chi(x)|^2=\frac{\chi(1)}{|N|}\sum_{g\in Q}\chi([x,g]),$$ where $Q$ is the pre-image in $N$ of $C_{N/Z(P)}(xZ(P)).$ Let $\mu$ be the map which takes $g\in Q$ to $\chi([x,g])/\chi(1).$ It is clear that $\mu$ is a linear character of $Q,$ and $$|\chi(x)|^2=\frac{\chi(1)^2|Q|}{|N|}[\mu,1]_Q.$$
Since $\chi(x)\neq 0,$ we must have $\mu=1,$ so $|\chi(x)|^2=\chi(1)^2/|N:Q|.$ Since $\chi$ is faithful, $\mu=1$ implies that $[x,g]=1$ for all $g\in Q,$ so $Q=C_N(x).$ Since $N=HC_N(x),$ we have $|N:Q|=|H:C_H(x)|.$ We obtain $$|\chi(x)|^2=\chi(1)^2/|H:C_H(x)|=\chi(1)^2/h(x).$$ (We know that $h(x)=|H|,$ but we do not need this here.)

Let $p^n=|P|$ and $E=\mathbb Q(\zeta_{p^n})$ and let $\mathcal O$ be the ring of integers of $E.$ Let $a=\chi(x)$ and $p^r=\chi(1)^2/h(x).$ Then $a\in \mathcal O$ and we have shown so far that $a\bar a=p^r.$ In particular, $(a)(\bar a)=(p)^r$ where the brackets denote ideals generated by elements of $\mathcal O.$ However $(p)$ factorizes as $(p)=\pi^{p^{n-1}(p-1)},$ where $\pi=(1-\zeta_{p^n})$ is the unique ramified prime of $\mathcal O.$ (See for example page $9$ in \cite{Washington1997}.) Since $(a)$ divides $(p)^r,$ $(a)$ is a power of $\pi$ and as $\pi=\bar{\pi},$ also $(a)=(\bar a).$ Hence $a=\bar a\omega$ for some $\omega\in\mathcal O.$ We see that $|\omega^\sigma|=1$ for any automorphism $\sigma$ of $E,$ so $\omega$ is a root of unity (see Lemma $1.6$ in \cite{Washington1997}). Hence $a^2=p^r(a/\bar a)=p^r\omega$ and so $a=p^{r/2}\zeta$ where $\zeta$ is a root of unity having $\zeta^2=\omega.$ This completes the proof of part $(1).$

For part $(3),$ we assume that $p$ is odd. Let $\varphi$ be the unique irreducible constituent of $\chi_Z.$ Then $(P,N,Z,\chi_N,\varphi)$ is a character five and Theorem \ref{FRC} applies. By Theorem \ref{FRC}$(1)$ and $(3)$, there exists $U\subseteq P$ with $UN=P,$ $U\cap N=Z$ and $\chi_U=\Psi_U\xi$ for some $\xi\in\mathrm{Irr}(U).$ Since $\chi(1)^2=|N/Z|,$ Theorem \ref{FRC}(2) shows that $\xi(1)=1$ and $\chi_U$ has no zeros. Since also $\chi=\bar\Psi\xi^G$ by Theorem \ref{FRC}$(3),$ we see that $\mathrm{Supp}(\chi)$ is exactly the union of the conjugates of $U.$ Part $(3)$ follows, since if $x$ is contained in a conjugate of $U$ then so are its powers. This completes the proof of part $(3),$ and the proof of Theorem B is complete.
\end{proof}

\section{Groups of small order}
In this last section, we prove Theorem C.

\begin{lem}\label{ilem}
Let $N$ be a normal subgroup of the $p$-group $P.$ Suppose $\chi\in\mathrm{Irr}(P)$ and let $\varphi$ be an irreducible constituent of $\chi_N.$ If $x\in\mathrm{Supp}(\chi)$ then $x^m\in N,$ where $m=\frac{|P:N|}{\chi(1)/\varphi(1)}.$
\end{lem}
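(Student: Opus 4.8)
The plan is to reduce to the inertia group of $\varphi$ and there exploit the fact that a $\varphi$-invariant situation over a \emph{cyclic} quotient is completely governed by Gallagher's theorem. Write $T=I_P(\varphi)$ for the stabilizer of $\varphi$ in $P$ and let $\psi\in\mathrm{Irr}(T\mid\varphi)$ be its Clifford correspondent, so that $\chi=\psi^P$ and $\psi_N=e\varphi$ with $e=\psi(1)/\varphi(1)$. A short computation using $\chi(1)=|P:T|\psi(1)$ then identifies the exponent as $m=|T:N|/e$. Since $\chi=\psi^P$ is induced from $T$, it vanishes off the union of the conjugates of $T$; as $\chi(x)\neq0$, some conjugate $x^g$ lies in $T$. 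Because $N\vartriangleleft P$, the elements $xN$ and $x^gN$ are conjugate in $P/N$ and hence have equal order, and this order is the same whether computed in $P/N$ or in $T/N$. It therefore suffices to prove $(x^g)^m\in N$, so I may replace $x$ by $x^g$ and assume from now on that $x\in T$ and that $\varphi$ is $T$-invariant.

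Set $C=\langle x\rangle N\subseteq T$ and let $p^a=|C:N|$, the order of $xN$ in $T/N$. The goal is now to show $p^a\mid m=|T:N|/e$, which, since everything in sight is a power of $p$, is equivalent to the inequality $e\leq|T:C|$. To access this I would restrict $\psi$ to $C$. Since $\varphi$ is invariant in $C$ and $C/N$ is cyclic, $\varphi$ extends to $\hat\varphi\in\mathrm{Irr}(C)$ (Corollary 11.22 in \cite{Isaacs1976}), so Gallagher's theorem (Corollary 6.17 in \cite{Isaacs1976}) gives $\psi_C=\sum_{\beta}c_\beta\,\beta\hat\varphi$, where $\beta$ runs over the linear characters of $C/N$ and the $c_\beta$ are nonnegative integers. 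Restricting this decomposition to $N$ shows $\sum_\beta c_\beta=e$, while the distinctness of the irreducibles $\beta\hat\varphi$ gives $\sum_\beta c_\beta^{\,2}=[\psi_C,\psi_C]_C$.

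The key step is then purely formal. As $\psi$ is irreducible, Frobenius reciprocity shows that $\psi$ occurs in $(\psi_C)^T$ with multiplicity $[\psi_C,\psi_C]_C$, whence the standard index bound $[\psi_C,\psi_C]_C\leq|T:C|$ holds. Combining this with the elementary observation that $c\leq c^2$ for every nonnegative integer $c$, I obtain
\[
e=\sum_\beta c_\beta\leq\sum_\beta c_\beta^{\,2}=[\psi_C,\psi_C]_C\leq|T:C|,
\]
which is exactly the desired inequality $e\leq|T:C|$. Multiplying by $p^a=|C:N|$ yields $e\,p^a\leq|T:N|$, and hence $p^a$ divides $|T:N|/e=m$, so that $x^m\in N$, as required.

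I expect the only real subtlety to be the reduction in the first paragraph: one must verify that passing to the inertia group both sends the exponent $m$ to $|T:N|/e$ and preserves the order of $xN$ under the conjugation that carries $x$ into $T$. Once inside $T$ the argument is uniform in $x$ and no longer uses $\chi(x)\neq0$ — the nonvanishing hypothesis is needed only to place a conjugate of $x$ inside $T$. The heart of the matter is the interplay between Gallagher's decomposition over the cyclic section $C/N$ and the trivial inequality $\sum_\beta c_\beta\leq\sum_\beta c_\beta^{\,2}$; recognizing that this simple combination delivers $e\leq|T:C|$ is the crucial idea.
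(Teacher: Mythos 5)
Your proof is correct, but it takes a genuinely different route from the paper's. The paper argues by induction on $|P:N|$: it interpolates a normal subgroup $M$ with $N\subset M\subseteq P$ and $|M:N|=p$, takes a constituent $\theta$ of $\chi_M$ over $\varphi$, and splits into two cases — either $\theta_N=\varphi$, where the exponent drops by a factor of $p$ and the order-$p$ quotient $M/N$ absorbs it, or $\theta=\varphi^M$, where the stabilizer $J$ of $\varphi$ satisfies $J\cap M\subseteq N$ and the hypothesis $\chi(x)\neq 0$ places $x$ in a conjugate of $J$, forcing $\langle x\rangle\cap M\subseteq N$. You instead pass once and for all to the inertia group $T=I_P(\varphi)$ via the Clifford correspondent $\psi$, and settle everything inside $T$ using Gallagher's theorem over the cyclic section $C/N=\langle xN\rangle$ together with the chain $e=\sum_\beta c_\beta\leq\sum_\beta c_\beta^2=[\psi_C,\psi_C]_C\leq|T:C|$. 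All your steps check out: the identification $m=|T:N|/e$, the reduction to a conjugate of $x$ lying in $T$ (conjugation preserves the order of $xN$ and fixes $N$), the extendibility of $\varphi$ to $C$ over the cyclic quotient, and the Frobenius reciprocity bound on $[\psi_C,\psi_C]_C$. Your route is non-inductive and isolates the role of $\chi(x)\neq 0$ more sharply — it is used only to conjugate $x$ into $T$ — and in fact proves the slightly stronger statement that the exponent of $T/N$ divides $|T:N|/e$. The paper's induction is more elementary in that it never invokes extendibility or Gallagher, and it matches the style of the proof of Lemma \ref{tD}; the price is that the nonvanishing hypothesis must do work at every inductive step of the second type rather than just once.
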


\begin{proof}
We use induction on $|P:N|.$ The result is clear when $N=P,$ so assume $N\neq P$ and let $M$ be a normal subgroup of $P$ with $N\subset M\subseteq P$ and $|M/N|=p.$ Let $\theta$ be an irreducible constituent of $\chi_M$ that lies over $\varphi.$ By induction, $x^r\in M,$ where $r=\frac{|P:M|}{\chi(1)/\theta(1)}.$ There are two possibilities; either $\theta_N=\varphi$ and $rp=m,$ or $\theta=\varphi^M$ and $r=m.$ In the first case, $x^r\in M$ immediately implies $x^m\in N$ as required. In the second case, let $J$ be the stabilizer of $\varphi\in G.$ Then $J\cap M\subseteq N.$ But $\chi(x)\neq 0,$ so $x$ must be contained in some conjugate $J^g$ of $J,$ and since also $J^g\cap M\subseteq N,$ it follows that $\langle x\rangle\cap M\subseteq N.$ Therefore $x^r=x^m\in N,$ as required.
\end{proof}

\begin{proof}[Proof of Theorem C]
We have a $p$-group $P$ with $\chi\in\mathrm{Irr}(P)$ and $x\in\mathrm{Supp}(\chi)$ such that the order of $x$ does not divide $|P|/\chi(1)^2,$ and we have to show that $\chi(1)\geq p^4$ and that $|P|\geq p^{10}.$ If $Z$ is the centre of $P$ then $\chi(1)^2$ divides $|P/Z|,$ so $x\notin Z,$ and in particular $\chi$ is not of central type and so $|P|/\chi(1)^2$ is at least $p^2.$ We therefore need only prove that $\chi(1)\geq p^4,$ since then $|P|\geq p^{8+2}=p^{10}.$

Let $C$ be the cyclic group generated by $x$ and write $$|P|=p^n,\chi(1)=p^a,|C|=p^e,$$ so our assumption is that $e\geq n-2a+1,$ yet $x\in\mathrm{Supp}(\chi).$ We may assume that $\chi(1)$ is as small as possible subject to these conditions, and we must show that $a\geq 4.$

If $K$ is the kernel of $\chi$ then the image $xK$ of $x$ in $P/K$ has order at least equal to $p^e/|K|$ and lies in $\mathrm{Supp}(\chi)$ where $\chi$ is regarded as a character of $P/K,$ so $P/K,\chi$ and $xK$ constitute another example with the same value $a.$ Hence, we can assume $\chi$ is faithful. 

Since $\chi$ is not linear, $\chi$ is induced from a character $\varphi$ of some maximal subgroup $M$ of $P.$ Since $\chi(x)\neq 0,$ $x\in M$ and $x^g\in\mathrm{Supp}(\varphi)$ for some $g\in P.$ Since $\varphi(1)<\chi(1),$ it follows that $p^e$ divides $|M|/\varphi(1)^2=p^{n-2a+1}.$ On the other hand $e>n-2a,$ so in fact $e=n-2a+1.$

Since $\chi$ is faithful, Lemma \ref{corefree} shows that there exists $h\in P$ such that $C\cap C^h=1.$ Hence $|C|^2\leq |M|,$ or $2e+1\leq n.$ Also $n-2a\geq 2$ because $\chi$ cannot be of central type, as remarked at the beginning of the proof. Hence $e=n-2a+1\geq 3$ and $n\geq 7.$ If $n=7$ then $e$ is even, so $e\geq 4$ and $n< 2e+1,$ which is not the case. Hence $n\geq 8.$

To complete the proof, we need normal subgroups $X$ and $Y$ of $P$ with $|X|=p^3,$ $|Y|=p^5$ and $X$ contained in the centre of $Y.$ By a classical result, if an abelian $p$-group $A$ has order $p^m$ then the $p$-part of $|\mathrm{Aut}(A)|$ divides $p^{m(m-1)/2}.$ (This can be found in \cite{Burnside}.) Let $A$ be any maximal abelian normal subgroup of $P.$ Then $A=C_P(A)$ and so if $|A|=p^m$ then $|P|\leq p^{m(m+1)/2}.$ Since $|P|>p^6,$ $m\geq 4.$ Choose $X$ to be any normal subgroup of $P$ contained in $A$ and of order $p^3.$ Now $|P:C_P(X)|\leq p^3,$ so since $|P|\geq p^8,$ $|C_P(X)|\geq p^5.$ Choose any $Y\vartriangleleft P$ with $X\subseteq Y\subseteq C_P(X)$ and $|Y|=p^5.$

Let $\theta\in\mathrm{Irr}(Y)$ be an irreducible constituent of $\chi_Y$ and let $\lambda$ be the unique irreducible constituent of $\theta_X.$ Either $\theta$ is linear and $\theta_X=\lambda$ or $\theta(1)=p$ and $\theta$ and $\lambda$ are fully ramified with respect to $Y/X.$ 

If the first of these possibilities holds, Lemma \ref{ilem} gives $D\subseteq Y,$ where $D=\langle x^{|P:Y|/\chi(1)}\rangle$ Since $D\cap D^h=1,$ it follows that $|D|^2\leq |Y|$ or $2(e-n+5+a)\leq 5.$ Thus $e+a\leq n-3$ and since $n-2a+1=e,$ we obtain $a\geq 4,$ as required. 

Assuming now the second possibility holds, let $J$ be the stabilizer of $\theta.$ Then $(J,Y,X,\theta,\lambda)$ is a character five. Since $Y/X$ is abelian of odd order, by Theorem \ref{FRC}$(1)$ and $(3)$ there exists a subgroup $U$ such that $YU=J,$ $Y\cap U=X$ and $\theta$ vanishes on elements of $J$ that are not conjugate to an element of $U.$ Some conjugate of $x$ must be contained in $J$ since $\chi$ is induced from a character of $J.$ In summary, some conjugate of $x$ lies in $U.$ It follows that $C\cap Y\subseteq X.$ Lemma \ref{ilem} gives $E\subseteq Y$ where $E=\langle x^{p|P:Y|/\chi(1)}\rangle,$ so $E\subseteq X.$ Since $E\cap E^h=1,$ $|E|^2\leq |X|$ and so $2(e-1-n+5+a)\leq 3,$ or $e+a\leq n-3.$ Again using $n-2a+1=e,$ it follows that $a\geq 4.$ This completes the proof.
\end{proof}

\emph{London, England. Email: tom@beech84.fsnet.co.uk.}

\end{document}